 \documentclass[final,1p,times, authoryear]{elsarticle}
\usepackage{amssymb}
\usepackage{amsthm}

\usepackage[unicode,pdfborder={0 0 0}]{hyperref}
\usepackage{amsmath}
\usepackage{dsfont}
\usepackage{amsfonts}
\usepackage{xcolor}
\usepackage{soul}
\usepackage{graphicx}
\usepackage{subcaption}
\usepackage{mathrsfs}
\usepackage{amsmath, amsthm}
\usepackage{amsfonts,amssymb,dsfont}
\usepackage{nicefrac,mathrsfs}
\usepackage{bm}                                 
\usepackage{algorithm}
\usepackage{algorithmicx}
\usepackage{algpseudocode}
\usepackage{graphicx}
\usepackage{subcaption}
\usepackage[backgroundcolor=white,bordercolor=orange]{todonotes}

\usepackage{enumerate}

\newcommand{\ccA}{{\mathscr A}}  
\newcommand{\ccL}{{\mathscr L}}
\newcommand{\ccB}{{\mathscr B}}

\newcommand{\ccF}{{\mathscr F}}

\newcommand{\ccP}{{\mathscr P}}
\newcommand{\ccE}{{\mathscr E}}

\newcommand{\ccN}{{\mathscr N}}

\renewcommand{\ccP}{{\mathscr P}}

\newcommand{\Ind}{{\mathds 1}}
\newcommand{\ind}[1]{\Ind_{\{#1\}}}

\newcommand{\R}{\mathbb{R}}

\newcommand{\bbF}{\mathbb{F}}

\newcommand{\N}{\mathbb{N}}

\usepackage[utf8]{inputenc}

\numberwithin{equation}{section}

\newtheorem{assumption}{Assumption}[section]

\newtheorem{theorem}[assumption]{Theorem}

\newtheorem{proposition}[assumption]{Proposition}

\newtheorem{lemma}[assumption]{Lemma}

\theoremstyle{definition}

\newtheorem{remark}[assumption]{Remark}
\newtheorem{example}[assumption]{Example}

\begin{document}

\begin{frontmatter}

\title{Nonlinear filtering with stochastic discontinuities}

 \author[1]{Thorsten Schmidt\corref{cor}}
 \ead{thorsten.schmidt@stochastik.uni-freiburg.de}

 \author[1]{Félix B. Tambe-Ndonfack}
 \ead{felix.ndonfack@stochastik.uni-freiburg.de}
 
\affiliation[1]{organization={Department of Mathematical Stochastics, Mathematical Institute, University of Freiburg },
  addressline={Ernst-Zermelo-Str. 1},
  postcode={79104},
  city={Freiburg},
  country={Germany}} 
    
\begin{abstract}
Filtering problems with jumps in both the signal and the observation have been extensively studied, typically under the assumption that jump times are totally inaccessible. In many applications, however, jump times are known in advance (i.e., predictable), such as scheduled clinical visits, dividend payment dates, or inspection times in engineering systems.

Taking predictable jump times as a starting point, we investigate a filtering problem in which both the signal and the observations can exhibit jump at predictable times.
We derive the corresponding Kushner–Stratonovich and Zakai equations, thereby extending classical nonlinear filtering results to a setting with predictable discontinuities.

We illustrate the framework on a Kalman filtering model with predictable jumps and on applications to longitudinal clinical studies, such as spinal muscular atrophy (SMA), as well as to machine learning models (neural jump ODEs) and credit risk.
\end{abstract}

\begin{keyword}
    Filtering \sep semimartingales \sep stochastic discontinuities \sep Kushner-Stratonovich equation \sep Zakai equation \sep compensated random measures
\sep credit risk \sep neural jump ODEs \sep Kalman filter \sep spinal muscular atrophy
   \MSC[2020] 60G35 \sep 60G46 \sep 60G57 \sep 60J76
\end{keyword}

\end{frontmatter}

\section{Introduction}\label{sec:Introduction}

Filtering theory is concerned with estimating the distribution of an unobserved (hidden) signal process from partial and noisy observations. Classical continuous-time results, including  the Kushner-Stratonovich and Zakai equations, provide recursive stochastic equations for the conditional law and form a foundation for applications ranging from engineering to mathematical finance and statistics (see, e.g., \cite{bain2009crisan}).

A substantial part of the filtering literature deals with systems that exhibit jumps. Already early semimartingale approaches established the structural form of nonlinear filtering equations  in a high degree of generality, see \cite{grigelionis1972stochastic,grigelionis2006stochastic}. Subsequent  developments have predominantly focused on the case where jumps are \emph{totally inaccessible}, for instance when driven by L\texorpdfstring{\'e}{e}vy processes or Poisson random measures. In this setting, the canonical form of the compensator and established  martingale representation theorems greatly simplify derivations of filtering equations.

In numerous applied contexts, however, information arrives at \emph{predictable} (i.e. scheduled) times, inducing jumps in the observable filtration. Such discontinuities, which occur on \emph{thin predictable sets}, have recently been formalized under the concept of \emph{stochastic discontinuities}.

This framework provides a systematic way to model processes with jumps at predictable times and has led to recent theoretical developments, for instance in affine processes, see  \cite{keller2019affine}.
The importance of predictable jump times has also been widely recognized in the financial and econometrics literature; see, e.g.,
\cite{Fama1970,merton1974pricing,GeskeJohnson84,Piazzesi2005,duffie2001term,BelangerShreveWong2004,GehmlichSchmidt2015MF,fontana2020term, fontana2024term}.
The corresponding semimartingale calculus required in such settings is essential for modelling events such as discrete dividend payments, scheduled clinical visits, planned sensor readings, or event-driven updates in machine-learning models. 

Several important contributions in nonlinear filtering have addressed models with jumps. The work of \cite{ceci2012nonlinear,ceci2014zakai} provides a comprehensive treatment of correlated jump-diffusion signals and observations, deriving and analysing the corresponding Kushner-Stratonovich and Zakai equations.  In particular, they clarify how a common jump structure modifies the usual filtering identities and how one can recover a linear Zakai under a suitable change of measure. 
In mathematical finance, \cite{frey2010pricing,frey2012pricing}  applied filtering with jumps to credit risk, where default events generate discontinuous information flows. More recently, \cite{bandini2022stochastic} study filtering for pure-jump signals, allowing both the signal and the observation to jump at predictable times and highlighting the distinct mathematical features that arise compared to the classical totally inaccessible case.

Despite these advances, a systematic treatment of  filtering equations for a \emph{diffusion with scheduled jumps} signal, observed  at predictable times in a noisy environment, is still lacking. This setting presents a key technical challenge: the observation filtration itself possesses predictable jump times, and the signal may jump systematically  at these scheduled instants. A complete analysis requires combining tools from the  theory of stochastic discontinuities with the classical change of measure and martingale methods from nonlinear filtering.

This paper provides such an analysis. We introduce a semimartingale model in which the signal evolves as a diffusion between a finite sequence of predictable times and may jump at these times. The observation process is a pure-jump process that updates only at these times, with each increment consisting of a signal-dependent term plus additive noise. Within this framework, we obtain the following main contributions:
\begin{enumerate}[1.]
    \item \emph{Explicit filtering equations with predictable jumps.} We derive the Kushner-Stratonovich for the conditional distribution of the signal under the observation filtration. The resulting equation contains an additional predictable-jump term and an explicit expression for the compensated random measure integrand, given by a Radon-Nikodym derivative involving  predictable projections of signal dependent jump kernels. This representation clarifies which parts of the classical structure persists and how predictable jump modify the innovation term.
    \item \emph{Zakai formulation via a tailored change of measure}. By constructing a Girsanov-type transformation adapted to the random measure setting, we obtain an unnormalised filter satisfying a linear Zakai equation. This formulation is analytically and computationally advantageous, extending the classical change of measure approach to filtrations with predictable jump times. Our approach builds on standard results for changes of measure for integer-valued random measures and martingale problems for semimartingales.
\end{enumerate}
To illustrate the applicability of our results, we discuss examples including an Ornstein–Uhlenbeck-type signal with scheduled jumps, which leads to a Kalman-like update at deterministic times, and a stylized health monitoring model in which clinical visits occur at predictable times. We further outline applications to machine learning models with event-driven updates (neural jump ODEs) and to credit risk. 

The remainder of the paper is organized as follows: Section \ref{sec 2:Filtering theory} introduces the semimartingale and filtering framework, Section \ref{sec3: motivating examples} presents our examples, Section \ref{sec4: proof of KS} derives the Kushner-Stratonovich equation, and Section \ref{sec5: zakai formulation} develops the Zakai formulation via a change of measure.

\section{The filtering framework}\label{sec 2:Filtering theory}
In this section, we introduce the filtering framework and specify the underlying stochastic model. We begin by describing the signal and observation processes within a semimartingale setting; for the theoretical background we refer to \cite{jacod2013limit}. 

Consider a filtered probability space $(\Omega,\ccF,\bbF,P)$ satisfying the usual conditions, i.e.\ $\bbF=(\ccF_t)_{t \ge 0}$ is right-continuous and all subsets of sets with probability zero are contained in $\ccF_0$. 
By $\ccN$ we denote the nullsets of $P$.
The predictable $\sigma$-field $\ccP$ is generated by all processes which are left-continuous (c\`ag) while the optional $\sigma$-field is generated by all c\`adl\`ag processes. 
A random time $T$ is a measurable mapping $T:\Omega \to \R_{\ge 0} \cup \{\infty\}$. It is called a \emph{stopping time}, if $\{T \le t\} \in \ccF_t$ for all $t \ge 0$. It is called a \emph{predictable} time if it is \emph{announced} by stopping times $T_n \uparrow T$ with $T_n < T$.

The filtering problem consists of an unobserved \emph{signal} $X$ and an \emph{observation} $Y$. The driving processes are an $m$-dimensional Brownian motion $B$ and a pure-jump process $J$ whose jumps arise at times which are predictable in the observation filtration.   More precisely, consider a fixed finite number $K\in \N$ of jump times, and $m+n$-dimensional random variables $Z_1,\dots,Z_K$, which are i.i.d.\ and  independent of $B$. We denote in the following $Z_i=(\xi_i,\eta_i)$ where $\xi_i \in \R^m$ and $\eta_i \in \R^n$. 
Jumps arise at the random times (to be made more precise later) $T_1,\dots,T_K$ and the jump process $J$  is given by
$$ J_t = \sum_{i=1}^K \ind{T_i \le t} \xi_i, \qquad t \ge 0. $$
The signal is the special semimartingale given by the unique strong solution of the stochastic differential equation (SDE)
\begin{align} \label{eq:X}
        dX_t &= a(X_{t})\, dt + b(X_{t}) \, dB_t+ c(X_{t-}) \, dJ_t,\quad X_0=x_0\in \R^m \qquad t \ge 0,
\end{align} 
with measurable functions 
\begin{align}\label{assumption 0}
    a: \R^m \to \R^m,\quad b,c: \R^m \to \R^{m\times m}.
\end{align}
The observation occurs at predictable times,  predictable in the observation filtration, and is given by a function of the signal plus additive noise:  we assume that 
\begin{align}\label{eq:Y}
	Y_t &= \sum_{i=1}^K \ind{T_i \le t}\,  \Big( f(X_{T_i-},Y_{T_i-}) + \eta_i \Big), \quad t \ge 0
\end{align}
where $f: \R^m \times \R^n  \to \R^n$ is an observation function, which is measurable.
Additionally, we assume 
that $(T_i)_{i=1}^K$ are positive and predictable with respect to the (augmented) filtration generated by $Y$, which we denote by $\bbF^Y=(\ccF_t^Y)_{t \ge 0}$ where $\ccF_t^Y = \sigma(Y_s \colon s \le t) \vee \ccN$. Note that this filtration is right-continuous by construction.

For simplicity, the observation filtration is assumed to be piecewise constant. 
It is possible to extend this setting by including additional, continuously observed covariates here, however, at the cost of complicating the results significantly.

\subsection{Representation through random measures}
For the following, it will be useful to utilise random measures to describe the jumps, and in particular their compensators. 
Before introducing random measures, we recall the concept of predictable projections for processes, see again \cite{jacod2013limit} for details.

For a $d$-dimensional $\ccF \otimes \ccB(\R_{\ge 0})$-measurable stochastic process, there always exists a unique process ${}^p X$, called the \emph{predictable projection}, which is itself predictable and satisfies $({}^pX)_T=E[X_T|\ccF_{T-}]$ on $T< \infty$ for all predictable times $T$.

Next, a random measure on $\R_{\ge 0} \times \R^n$ is a family $\mu=(\mu(\omega;dt,dy) \colon \omega \in \Omega)$ of nonnegative measures such that $\mu(\omega;\{0\}\times \R^n)= 0$ identically.
We introduce $\tilde \Omega = \Omega \times \R_{\ge 0} \times \R^n$ and $\tilde \ccP=\ccP \otimes \ccB(\R^n)$ and call a function $W:\tilde \Omega \to \R^n$ predictable if it is $\tilde \ccP$-measurable. For those, we define the stochastic process $W*\mu$ through
\begin{align}\label{stoch process}
    (W*\mu)_t(\omega) = \int_{[0,t]\times \R^n} W(\omega; s,y) \, \mu(\omega;ds,dy),
\end{align}
if the integral w.r.t.\ $|W|$ is finite, and $+\infty$ otherwise. The random measure itself is called predictable if $W*\mu$ is predictable for all predictable functions $W$.
For a random measure $\mu$ there exists a unique predictable measure $\nu$, called \emph{compensator} or dual predictable projection, which satisfies $E[(W*\mu)_\infty]=E[(W*\nu)_\infty]$ for every nonnegative predictable $W$.

For us, the most relevant  random measure, which we will denote by $\mu$ in the following, is the one  associated to the observation $Y$ and is given by
\begin{align} \label{eq:mu}
	\mu(dt,dy) = \sum_{i=1}^K \delta_{(T_i,\Delta Y_{T_i})} (dt,dy),
\end{align} 
where, according to Equation 
\eqref{eq:Y}, $\Delta Y_{T_i}= f(X_{T_i-},Y_{T_i-})+\eta_i,$ $i=1,\dots,K$.

We denote the  distribution  of  $\eta_1$ by $F:=P(\eta_1 \in \cdot)$ and introduce
the regular conditional distribution of $\Delta Y_{T_i}$ given $\ccF_{T_i-}^Y$ by
\begin{align}\label{conditional distribution of jump size:2}
    F^i(A) := E\big[F(A-f(X_{T_i-},Y_{T_i-})) | \ccF_{T_i-}^Y\big],\quad A \in \ccB(\R^n)
\end{align}

where 
$
A-x =\{y-x \colon y \in A\}$.
Note that $F^1,\dots,F^K$ are only defined up to null sets, but since we are only considering finitely many observations this does not pose a problem.

\begin{lemma}\label{Compensated random measure}
    The $\bbF^Y$-compensator\footnote{Also known in the literature as the dual predictable projection, \citep{jacod2013limit}} of $\mu$ is given by the predictable random measure
    \begin{align}\label{equ2: expression of mu^p}
        \nu(dt, dy) = \sum_{i=1}^K \delta_{T_i}(dt) F^i(dy).
    \end{align}
\end{lemma}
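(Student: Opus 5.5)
We verify the two defining properties of the compensator directly: $\nu$ is an $\bbF^Y$-predictable random measure, and $E[(W*\mu)_\infty]=E[(W*\nu)_\infty]$ for every nonnegative $\tilde\ccP$-measurable $W$ (here $\tilde\ccP$ is built on the $\bbF^Y$-predictable $\sigma$-field). Since $\mu$ charges only the graphs $[\![T_i]\!]$, both sides decompose into finite sums over $i=1,\dots,K$. It therefore suffices to treat each predictable time $T_i$ separately and show
\begin{align*}
E\big[W(T_i,\Delta Y_{T_i})\ind{T_i<\infty}\big] = E\Big[\ind{T_i<\infty}\int_{\R^n} W(T_i,y)\,F^i(dy)\Big].
\end{align*}
The role of predictability of $T_i$ is that $\ccF^Y_{T_i-}$ is available and that the paper's identity $({}^pX)_{T}=E[X_T\mid \ccF_{T-}]$ applies.

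\textbf{Predictability of $\nu$.} I will use that $F^i$ admits an $\ccF^Y_{T_i-}$-measurable regular version, that is, a kernel $(\omega,A)\mapsto F^i(\omega;A)$ that is $\ccF^Y_{T_i-}$-measurable in $\omega$. By a standard result (Jacod--Shiryaev, I.2.4), $\ccF_{T_i-}$-measurable variables $\zeta$ yield predictable processes $\zeta\ind{T_i\le t}$ when $T_i$ is predictable. Hence, for $W$ predictable,
\begin{align*}
(W*\nu)_t=\sum_i \ind{T_i\le t}\int W(T_i,y)\,F^i(dy).
\end{align*}
Since $W(T_i,\cdot)$ is $\ccF^Y_{T_i-}\otimes\ccB(\R^n)$-measurable (again by I.2.4, applied to $\tilde\ccP$-functions evaluated at a predictable time), the integral is $\ccF^Y_{T_i-}$-measurable. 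This makes $W*\nu$ predictable.

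\textbf{Compensator identity.} Set $G:=\ccF^Y_{T_i-}$. Condition on $G$ and use that $W(T_i,\cdot)$ is $G\otimes\ccB$-measurable. It then suffices that the regular conditional law of $\Delta Y_{T_i}=f(X_{T_i-},Y_{T_i-})+\eta_i$ given $G$ is $F^i$. The key step is to show that $\eta_i$ is independent of $\sigma(G, X_{T_i-})$. I would argue this from the construction: the $Z_j$ are i.i.d.\ and independent of $B$, and $X_{T_i-}$ and $Y_{T_i-}$ are functionals of $B$ and $Z_1,\dots,Z_{i-1}$. The same holds for $T_i$ itself, which I will take as part of the modelling assumption, implicitly encoded by the requirement that $T_i$ be predictable in $\bbF^Y$. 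Consequently $\ccF^Y_{T_i-}\subset\sigma(B,Z_1,\dots,Z_{i-1})\vee\ccN$. Given this independence, a monotone class argument over product functions $W(T_i,y)=h(\omega)g(y)$ with $h$ $G$-measurable yields
\begin{align*}
E[g(\Delta Y_{T_i})\mid G]&=E\big[E[g(f(X_{T_i-},Y_{T_i-})+\eta_i)\mid G\vee\sigma(X_{T_i-})]\,\big|\, G\big]\\
&=E\Big[\int g(f(X_{T_i-},Y_{T_i-})+z)\,F(dz)\,\Big|\, G\Big]=\int g\,dF^i.
\end{align*}
The last equality follows from the definition of $F^i$ in \eqref{conditional distribution of jump size:2}, extended from indicators to general $g$. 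Summing over $i$ gives the claim.

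\textbf{Main obstacle.} The delicate point is justifying $\ccF^Y_{T_i-}\subseteq\sigma(B,Z_1,\dots,Z_{i-1})\vee\ccN$, i.e.\ that the pre-$T_i$ observation information contains nothing about $\eta_i$. I would prove it by induction on $i$. On $\{T_{i-1}\le t<T_i\}$, $Y$ is constant, and $\ccF^Y_{T_i-}$ is generated by $\ccF^Y_{T_{i-1}}$ together with the announcing times. If $T_i$ is assumed $\ccF^Y_{T_{i-1}}$-measurable (scheduled times), the inclusion follows directly. Otherwise I would state this as the natural standing interpretation of the model.
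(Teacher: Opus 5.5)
Your proposal follows essentially the same route as the paper. Both arguments establish predictability of $\nu$, condition on $\ccF^Y_{T_i-}$ at each predictable time, identify the conditional law of $\Delta Y_{T_i}=f(X_{T_i-},Y_{T_i-})+\eta_i$ as $F^i$, and close with a monotone class argument. The paper uses simple integrands $\Lambda\Ind_{(t,t']}(s)\Ind_A(y)$ and takes for granted both the predictability of $\nu$ and the independence of $\eta_i$ from the pre-$T_i$ information, so your version is in fact more careful.

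The case you leave open does not occur. Since $\bbF^Y$ is the natural filtration of the marked point process $(T_j,\Delta Y_{T_j})_j$, the compensator $A$ of $\sum_j\ind{T_j\le t}$ must jump by $1$ at the predictable time $T_i$. In such a filtration $\Delta A_{T_i}=G_i(\{T_i\})/G_i([T_i,\infty])$, with $G_i$ the conditional law of $T_i$ given $\ccF^Y_{T_{i-1}}$. This forces $G_i$ to be a Dirac mass, so $T_i$ is a.s.\ $\ccF^Y_{T_{i-1}}$-measurable, and your induction closes under the paper's standing assumptions.
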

\begin{proof}
We note that $\nu$ is predictable by construction. Moreover, for a simple nonnegative and predictable $W$, i.e.\ when
$$ W(\omega;s,y) = \Lambda(\omega) \Ind_{(t,t']}(s) \Ind_A(y)$$
with an $\ccF^Y_t$-measurable random variable $\Lambda$ and $A\in \ccB(\R^n)$ we obtain that
\begin{align*}
    E[(W*\mu)_\infty] &= \sum_{i=1}^K E\Big[ \,E\big[\Lambda\ind{T_i \in (t,t']} \ind{\Delta Y_{T_i} \in A} |\ccF_{T_i-}^Y\big] \Big] \\
    &= \sum_{i=1}^K E\Big[\Lambda\ind{T_i \in (t,t']} \,E\big[ \ind{\Delta Y_{T_i} \in A} |\ccF_{T_i-}^Y\big] \Big]. 
\end{align*}
Since $\Delta Y_{T_i}= f(X_{T_i-},Y_{T_i-})+\eta_i$ we obtain that
\begin{align*}
    E\big[ \ind{\Delta Y_{T_i} \in A} |\ccF_{T_i-}^Y\big]
    &= P(\eta_i \in A-f(X_{T_i-},Y_{T_i-}) |\ccF_{T_i-}^Y)
    = F^i(A).
\end{align*}
The result now follows by linearity and an application of the monotone class theorem.
\end{proof}

\subsection{The model under full information} 
The first step is to construct the model and the associated full information filtration $\bbF$.
To this end, we introduce the random measure
$$ 
\psi(dt,dz) = \sum_{i=1}^K \delta_{(T_i, Z_i)}(dt,dz). 
$$
Since $Z_i=(\xi_i,\eta_i)$, we obtain with Equation \eqref{eq:Y} that
$\Delta Y_{T_i} = f(X_{T_i-},Y_{T_i-})+\eta_i$, such that
 \begin{align}\label{Relation btw two measures}
     (W* \mu)_t = 
     \int_0^t\int_{\R^{m + n}} W(s,f(X_{s-},Y_{s-}) + \eta)\,\psi (ds,d\xi \otimes d\eta), \qquad t \ge 0.
 \end{align}
The signal $X$ and the observation $Y$ can now be represented in the following form,
\begin{align}\label{sys: 2}
    \begin{aligned}
        dX_t &= a(X_t) \, dt + b(X_t) \, dB_t + \int_{\R^{m + n}} c(X_{t-})\, \xi \, \psi(dt,d\xi \otimes d\eta), \\
        dY_t &= \int_{\R^{m + n}} \big( f(X_{t-},Y_{t-}) + \eta) \, \psi(dt,d\xi \otimes d\eta).
    \end{aligned}
\end{align}
For the following, we assume that 
 $\bbF$ is given by $\ccF_t = \sigma(X_s,Y_s \colon s \le t) \vee \ccN$.
As in Lemma \ref{Compensated random measure}, we obtain that the $\bbF$-compensator of $\psi$ is given by the predictable measure $\sum_{i=1}^K \delta_{T_i}(dt)F_Z(dz)$, where $F_Z$ denotes the distribution of $Z_1$. Denote by $P(\xi\in d\xi|\eta = \eta_0)=F_{\xi|\eta}(d\xi|\eta_0)$  the conditional distribution of $\xi$  given $\eta = \eta_0 $ under the joint law $F_Z$.

\subsection{The Kushner-Stratonovich equation}
Filtering is the dynamic, recursive representation of the conditional distribution of the signal. In the semimartingale-framework we consider here, this conditional distribution can be equivalently characterized by the family of stochastic processes $\pi(\phi)$ where $\phi$ range over all twice continuously differentiable and bounded functions and 
\begin{align} \label{eq:pi_t}
    \pi_t(\phi) = E[\phi(X_t) | \ccF_t^Y ], \qquad t \ge 0.
\end{align}
However, we need to choose an appropriate regularisation of the process $\pi$. Theorem $2.24$ in \citet{bain2009crisan} shows that $\pi$ can be chosen optional and in such a way that  \eqref{eq:pi_t} holds almost surely.

For $\phi \in \mathcal{C}_b^{2}(\R^m; \R)$, i.e.\ a continuous function $\phi: \R^m \to \R$ such  its first and second derivatives  are bounded and continuous,
denote by
$\ccL$ the generator of the continuous part of $X$, i.e.
\begin{align}\label{eq:ccL}
\ccL\phi (x) :=  \sum_{i=1}^m a_i(x) \dfrac{\partial \phi}{\partial x_i}(x) + \frac{1}{2} \sum_{i,j=1}^m \sigma_{ij}(x) \frac{\partial^2 \phi}{\partial x_i \partial x_j}(x),
\end{align}
where $\sigma := b(x)b(x)^\top$ and furthermore, for the jump part,
\begin{align}\label{eq: ccA}
    \ccA \phi(x):=\int_{\R^{m+n}} \big(\phi(x+c(x)\xi) - \phi(x)\big)F_Z(d\xi \otimes d\eta).
\end{align}

\begin{assumption} \label{assumption 1}
Assume that for all $t \ge 0$, the functions $a,b,c,f$ from Equation \eqref{eq:X} satisfy
\begin{align*}
        E\bigg[\displaystyle\int_0^t \int_{\R^n} \nu(ds,dy) \bigg]< \infty
        ,\quad 
        E\bigg[\displaystyle \int_0^t \|b(X_s)\|^2\,ds \bigg] < \infty,\quad 
        E\bigg[\displaystyle \int_0^t |a(X_s)|\,ds \bigg] < \infty, \\
        E\bigg[\displaystyle\int_0^t \int_{\R^n} \|c(X_{s-})\| \,\nu(ds,dy) \bigg]< \infty,\quad E\bigg[\displaystyle\int_0^t \int_{\R^n} |f(X_{s-},y)| \, \nu(ds,dy) \bigg]< \infty.
\end{align*}
\end{assumption}

Under the above assumptions, we obtain the following Kushner-Stratonovich equation describing the dynamics of $\pi(\phi)$, which is one of the main result in this paper. To this, we denote by $\tilde{\mu}$  the $ \bbF^Y$-compensated random measure $\mu$, given by
\begin{align}
\label{F Y-compensated random measure}
    \tilde \mu (dt,dy) &= \mu(dt,dy)-\nu(dt,dy).
\end{align}

Furthermore, for a bounded, real and measurable function $\phi$ we define the predictable function $S=S(\phi)$ on $\Omega \times \R_{\ge0}\times\R^n$ by
\begin{align}\label{def:S}
    S(\phi)(t,y) := E[\Delta \phi(X_t) |\ccF_{t-}^Y,\Delta Y_{t}=y].
\end{align}
\begin{remark}\label{Remark S}
    To achieve well-posedness of $S$ in the definition \eqref{def:S}, we note that
    the optional projection $\xi:={}^{o,\bbF^Y}(\phi(X))$ is the unique $\bbF^Y$-adapted process such that $\xi_T=E[\phi(X_T)|\ccF^Y_T]$ for all $\bbF^Y$-stopping times $T$. By construction of $\bbF^Y$, $\ccF_t^Y=\sigma(\ccF_{t-}^Y,\Delta Y_t)$, such that 
    $\xi_t=F(t,\Delta Y_t)$, where $F:\Omega \times \R_{\ge 0} \times \R^n$ itself is predictable.
    This is precisely the function which we denote in Equation \eqref{def:S} by $S(\phi)(t,y)$.
\end{remark}

Before we state our first filtering result, we recall that $A_t= \sum_{i=1}^K \ind{T_i \le t}$ denotes the counting process of jump times, with associated measure $dA_t=  \sum_{i=1}^K \delta_{T_i}(dt)$
\begin{theorem}\label{Kushner-Stratonovich}
Assume that Assumption \ref{assumption 1}  holds. Then, for  every  $\phi \in \mathcal{C}_b^{2}(\R^m; \R)$,
    \begin{equation}\label{6}
        \pi_t(\phi)=\pi_0(\phi) + \int_0^t \pi_s(\ccL\phi)\,ds +\int_0^t \pi_{s-}(\ccA\phi)\, dA_s + (S(\phi)*\tilde \mu)_t,\quad   t\ge 0,
    \end{equation}
 \end{theorem}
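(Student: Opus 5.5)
Our approach is the standard semimartingale filtering route: write $\phi(X)$ as a semimartingale under $\bbF$, project onto $\bbF^Y$, and identify the martingale part via a representation theorem for $\bbF^Y$-martingales. First, Itô's formula applied to \eqref{sys: 2} gives, for $\phi\in\mathcal C_b^2$,
\begin{align*}
\phi(X_t)=\phi(x_0)+\int_0^t\ccL\phi(X_s)\,ds+\int_0^t\nabla\phi(X_s)^\top b(X_s)\,dB_s+\sum_{i=1}^K\ind{T_i\le t}\big(\phi(X_{T_i-}+c(X_{T_i-})\xi_i)-\phi(X_{T_i-})\big).
\end{align*}
We compensate the last sum with the $\bbF$-compensator $\sum_i\delta_{T_i}(dt)F_Z(dz)$ of $\psi$. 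This yields $\phi(X_t)=\phi(x_0)+\int_0^t\ccL\phi(X_s)ds+\int_0^t\ccA\phi(X_{s-})dA_s+M_t$, where $M$ is an $\bbF$-martingale. Assumption \ref{assumption 1} and boundedness of $\phi$, $\nabla\phi$, $\nabla^2\phi$ ensure that every term is integrable and that $M$ is a true martingale.

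Next we take conditional expectations given $\ccF^Y_t$. The standard Fubini-type argument for filtering (as in \cite{bain2009crisan}) shows that
\[
N_t:=\pi_t(\phi)-\pi_0(\phi)-\int_0^t\pi_s(\ccL\phi)\,ds-\int_0^t\pi_{s-}(\ccA\phi)\,dA_s
\]
is an $\bbF^Y$-martingale. For the $ds$-term we use $E[\int_s^t\ccL\phi(X_r)dr\mid\ccF^Y_s]=E[\int_s^t\pi_r(\ccL\phi)dr\mid\ccF^Y_s]$. For the $dA$-term we use that the $T_i$ are $\bbF^Y$-predictable and $A$ is $\bbF^Y$-predictable, so integrating against $dA$ allows replacement of $\ccA\phi(X_{T_i-})$ by its conditional expectation given $\ccF^Y_{T_i-}$. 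This conditional expectation equals $\pi_{T_i-}(\ccA\phi)$, where the predictable projection ${}^p\pi(\ccA\phi)$ coincides with $\pi_-(\ccA\phi)$ because $\bbF^Y$ has no continuous flow of information between observation times.

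Then we identify $N$. Since $\bbF^Y$ is generated by the marked point process $\mu$ with finitely many atoms at predictable times, every $\bbF^Y$-martingale has the form $W*\tilde\mu$ for some $\tilde\ccP$-measurable $W$ (the martingale representation for marked point processes, Jacod's theorem, \cite{jacod2013limit} III.4.37). Moreover, $N$ is constant between the $T_i$: $\pi$ is constant there except for the drift, which is compensated. Hence $N$ is a pure-jump process with jumps only at the $T_i$, and it suffices to identify its jumps $\Delta N_{T_i}=W(T_i,\Delta Y_{T_i})-\int W(T_i,y)F^i(dy)$. By Remark \ref{Remark S}, $\pi_{T_i}(\phi)=E[\phi(X_{T_i})\mid\ccF^Y_{T_i-},\Delta Y_{T_i}]$, and $\pi_{T_i-}(\phi)=E[\phi(X_{T_i-})\mid\ccF^Y_{T_i-}]$, again using constancy of the information before $T_i$. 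Thus $\Delta\pi_{T_i}(\phi)=S(\phi)(T_i,\Delta Y_{T_i})+E[\phi(X_{T_i-})\mid\ccF^Y_{T_i}]-E[\phi(X_{T_i-})\mid\ccF^Y_{T_i-}]$.

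The main obstacle is this jump identification, namely reconciling $\Delta\pi_{T_i}(\phi)-\pi_{T_i-}(\ccA\phi)$ with $S(\phi)(T_i,\Delta Y_{T_i})-\int S(\phi)(T_i,y)F^i(dy)$. The tower property gives $\int S(\phi)(T_i,y)F^i(dy)=E[\Delta\phi(X_{T_i})\mid\ccF^Y_{T_i-}]$. Since $\xi_i$ may be correlated with $\eta_i$, this equals $\pi_{T_i-}(\ccA\phi)$ only after averaging over $\eta_i$, which is why $F_Z$ appears in $\ccA$. The remaining discrepancy, the term $E[\phi(X_{T_i-})\mid\ccF^Y_{T_i}]-\pi_{T_i-}(\phi)$ (the Bayesian update of the pre-jump state), has zero $\ccF^Y_{T_i-}$-conditional mean. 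We would absorb it into the $\tilde\mu$-integrand: the natural interpretation of $S(\phi)(t,y)$ as a conditional expectation of $\Delta\phi(X_t)$ includes this update, because conditioning on $\Delta Y_t=y$ changes the law of $X_{t-}$. Concretely, we would verify that $W(t,y):=E[\phi(X_t)\mid\ccF^Y_{t-},\Delta Y_t=y]-\pi_{t-}(\phi)$ produces exactly these jumps, and check that this coincides with $S(\phi)$ as defined via Remark \ref{Remark S}. Finally, we would check the integrability $E[|S(\phi)|*\nu_\infty]<\infty$ (immediate from boundedness of $\phi$ and $K<\infty$) so that $S(\phi)*\tilde\mu$ is a genuine martingale.
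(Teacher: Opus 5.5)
Your argument is correct, but it proves a corrected statement. The $\tilde\mu$-integrand must be $G(t,y):=E[\phi(X_t)\mid\ccF^Y_{t-},\Delta Y_t=y]$, the function of Remarks \ref{Remark S} and \ref{connection}. Your $W=G-\pi_{t-}(\phi)$ gives the same integral, since $\mu(\{T_i\}\times\R^n)=\nu(\{T_i\}\times\R^n)=1$. It is not the $\Delta\phi$-version written in \eqref{def:S}.

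Your first two steps are the paper's: It\^o plus $\bbF$-compensation of $\psi$, projection onto $\bbF^Y$, and Jacod's representation. For the identification you take a different route. You read the integrand off the jump of the filter, using $\ccF^Y_{T_i}=\sigma(\ccF^Y_{T_i-},\Delta Y_{T_i})$ and $\int G(T_i,y)F^i(dy)=E[\phi(X_{T_i})\mid\ccF^Y_{T_i-}]=\pi_{T_i-}(\phi)+\pi_{T_i-}(\ccA\phi)$. This gives $\Delta N_{T_i}=G(T_i,\Delta Y_{T_i})-\int G(T_i,y)F^i(dy)=\Delta(G*\tilde\mu)_{T_i}$. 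The paper instead tests against arbitrary $N=T*\tilde\mu$, expands $\phi(X)N$ and $\pi(\phi)N$ by the product rule, and compares finite-variation parts, concluding $S=E[\Delta\phi(X_t)\mid\ccF^Y_{t-},\Delta Y_t=y]$. That innovations-type method carries over to totally inaccessible jumps, where $\Delta\pi$ cannot be read off pointwise. Yours exploits predictability and is shorter and more transparent here.

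The discrepancy you flagged is genuine. Set $H(t,y):=E[\phi(X_{t-})\mid\ccF^Y_{t-},\Delta Y_t=y]$. The $\Delta\phi$-version equals $G-H$, so it misses exactly $H(T_i,\Delta Y_{T_i})-\pi_{T_i-}(\phi)$, the Bayesian update of the pre-jump state. For $c\equiv0$ (Section \ref{NODEs}) one has $\Delta\phi(X)\equiv0$, so \eqref{def:S} vanishes identically and \eqref{6} would say the observations never update the filter.

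In the paper's argument this term is lost in \eqref{p4}. There $\phi(X_-)\cdot(T*\tilde\mu)$ is treated as projecting to a martingale, but it is not an $\bbF$-martingale: the $\bbF$-compensator of $\mu$ is $\sum_i\delta_{T_i}(dt)F(dy-f(X_{T_i-},Y_{T_i-}))$, not $\nu$. The $\ccF^Y_{T_i-}$-conditional mean of its jump at $T_i$ is $\int(H(T_i,y)-\pi_{T_i-}(\phi))T(T_i,y)F^i(dy)$. Restoring this in \eqref{temp1094} gives $S=G$ up to a $y$-independent term, in agreement with you.

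A few corrections to your write-up:
\begin{enumerate}
\item Drop the sentence saying a conditional expectation of $\Delta\phi(X_t)$ ``includes this update''. Conditioning on $\Delta Y_t=y$ updates $X_{t-}$ in both $\phi(X_t)$ and $\phi(X_{t-})$, and the update cancels. What enters is the jump of the filter, so state the result with $S(\phi):=G$ (or $G-\pi_-(\phi)$) rather than reinterpreting \eqref{def:S}.
\item $W$ does not coincide with the function of Remark \ref{Remark S}. It differs by $\pi_{t-}(\phi)$, which is harmless as noted above.
\item The identity $E[\ccA\phi(X_{T_i-})\mid\ccF^Y_{T_i-}]=\pi_{T_i-}(\ccA\phi)$ (and the analogue for $\phi$) follows from Hunt's lemma along an announcing sequence of $T_i$ once $\ccA\phi$ is continuous, e.g.\ for $c$ continuous. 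It does not follow from the piecewise constancy of $\bbF^Y$. For merely measurable $c$, read $\pi_-(\ccA\phi)$ as that conditional expectation.
\end{enumerate}
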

 
The proof of this result requires some preparation, which we will provide in the following sections. 
The proof itself is relegated to  Section \ref{sec:proof KS}.

Note that the integral with respect to the random measure $\tilde \mu$ in \eqref{6} simplifies to
    \begin{equation*}\label{finite form}
        \pi_t(\phi)=\pi_0(\phi) + \int_0^t \pi_s(\ccL \phi) ds +\sum_{i:T_i \le t} \pi_{{T_i}-}(\ccA \phi) + \sum_{i:T_i \le t} \Delta(S(\phi) * \tilde \mu)_{T_i}
    \end{equation*}
    where the stochastic integral reduces to a finite sum over the predictable jump times $\{T_1,\ldots,T_K\}$

\begin{remark}[Connection to classical Filtering]\label{connection}
For $m=n=1$, when the jump times $T_1,\dots,T_K$ are not predictable but totally inaccessible, the filtering problem reduces to a classical case, see e.g.\ Theorem 3.1 in \cite{ceci2012nonlinear}. First, in this case $A_t=t$. Hence, one can combine $\overline{\ccL}\phi = \ccL \phi + \ccA \phi$ and the Kushner-Stratonovich equation becomes
    \begin{equation*}
        \pi_t(\phi)=\pi_0(\phi) + \int_0^t \pi_s(\overline{\ccL}\phi(X_s))\,ds + (S(\phi)*\tilde \mu)_t,\quad   t\ge 0, 
    \end{equation*}
    Moreover, Equation (3.4) in \cite{ceci2012nonlinear} shows that $S(\phi)$ is replaced by  the well-known form
    \begin{align*}
       \dfrac{d\pi_{t-}(\lambda F\,\phi)}{d\pi_{t-}(\lambda F)}(y)-\pi_{t-}(\phi)+\dfrac{d\pi_{t-}(\bar L\phi)}{d\pi_{t-}(\lambda F)}(y),
    \end{align*}
    where $\lambda$ is the jump intensity, $F$ the jumps size distribution and $\bar L \varphi$ accounts for common jumps between signal and observation; for details we refer to \cite{ceci2012nonlinear}. This term corresponds to 
        \begin{align*}
        S(\phi)(t,y) =E[\phi(X_t) |\ccF_{t-}^Y,\Delta Y_{t}=y]-\pi_{t-}(\phi)
    \end{align*}
    in Equation \eqref{6}. From this it can be seen that the fundamental difference between these two cases is that for predictable jump times, the update is given by a conditional expectation at these times, whereas for totally inaccessible jump times, the update involves intensity-weighted averages due to the uncertainty about jump timing.
\end{remark}

\section{Examples}\label{sec3: motivating examples}

To illustrate the flexibility of our framework, we provide a number of examples. We first cover scheduled (i.e.\ deterministic) jumps, which cover discrete time as a special case and afterwards provide also examples for predictable but not deterministic times. 

\subsection{An Ornstein-Uhlenbeck process with deterministic jumps and the related Kalman filter}\label{Ornstein-Uhlenbeck process}

In many applications, in particular short-horizon, the scheduled times we are looking at are already fixed at initiation. These jumps therefore can be considered as deterministic, which will be covered in our first main example. 

Assume that $X$ is given by the unique strong solution of the SDE
\begin{align}
	\label{eq:OU}
	dX_t = -\lambda X_t \, dt + \sigma_X \, dB_t + \displaystyle \sum_{i=1}^K X_{t-}\xi_i \delta_{T_i}(dt),
    \qquad t \ge 0
\end{align}
where $\lambda$ is the mean-reversion parameter (and the mean-reversion level for simplicity set to zero), $\sigma_X$ is the volatility and $\xi_i$ represent the jump heights at the deterministic times $0<T_1< \dots< T_K$. 

This process is not a jump-diffusion, since the jumps are not driven by a L\'evy process, or more technically the jumps are not totally inaccessible. However, the process has still a high degree of tractability since it is an affine semimartingale in the sense of \cite{keller2019affine}. 

    \begin{figure}[!t]
    \centering    \includegraphics[width=14cm]{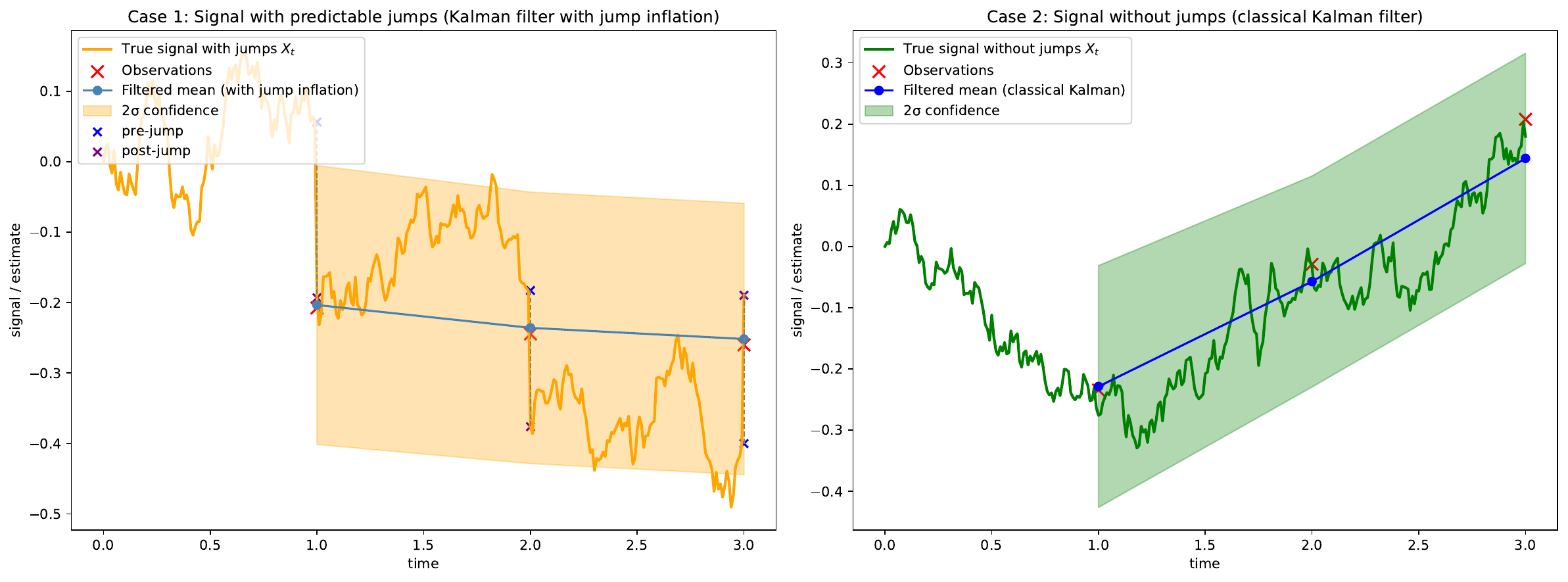}
    \caption{Comparison between Kalman filter case with and no jump }
    \label{fig:example 2}
    \end{figure}

\subsection{A Kalman filter with predictable jumps}\label{sec:Kalman filter}

To reach a Gaussian setting, we additionally assume that  the jump sizes $\xi_1,\dots,\xi_K$ are {i.i.d. $\ccN(0,Q)$} and that, for matrices $A \in \R^{n\times m}$ and $C \in \R^{n\times n},$    
$$Y_t = \sum_{i=1}^K \ind{T_i \le t}\,  (A X_{T_i-} - CY_{T_i-}+ \eta_i) , \quad t \ge 0,$$
    and $\eta_1,\dots,\eta_K$ are {i.i.d. $\ccN(0,R)$}, independent of $B$ and $(\xi_i)$, such that we arrive at an extension of the classical Kalman filter, as we will show in the following. 

Filtering in this setting proceeds in two steps: classical Kalman filtering between the jumping times and updating explicitly at the jumping times. For the first step, observe that in between observations we perform a classical smoothing step, such that the conditional distribution of $X_t$ given $\ccF_t^Y$ is Gaussian with mean $m_t$ and variance $P_t$, where for $t \in (T_{i-1},T_i]$, 

$$m_t = m_{T_{i}}e^{-\lambda(t-T_{i})},\quad P_t= \dfrac{\sigma^2_X}{2\lambda}(1-e^{-\lambda(t-T_{i})})+ P_{T_{i}} e^{-2\lambda(t-T_{i})}.$$

At observation times we simply do a Bayesian updating - much like a Kalman filter in discrete time \citep{jazwinski2007stochastic}. More precisely, we obtain that 
$$m_{T_{i}}= m_{T_{i}-}+K_iv_i, \quad 
P_{T_{i}}=\hat P_{T_{i}}-K_iA \hat P_{T_{i}} $$
where $v_i=\Delta Y_{T_i}-Am_{T_{i}^{-}}$, $\hat P_{T_i}=P_{T_i-}+Q$ and 
 the classical Kalman gain is given by $K_i =\hat P_{T_i}A\, (S_i)^{-1}$, and $S_i = A^2\hat P_{T_i} + R$.

Compared to the classical Kalman filter, we obtain a discrete-time-like updating step at the jump times. In our setting here, we smooth in between the observation times and then update at the observation time, taking the variance of the jump sizes, $Q$, into account.

\subsection{Medical examinations: predictable times in a clinical context}\label{medical examination}

Following \cite{hackenberg2022deep}, medical examinations often lead to a small data context where the precise treatment of the dynamics of the random times become important. In many cases, medical examinations are scheduled sequentially, often depending on the outcome of the preceding examination. In this setting, the examination times are \emph{predictable}.

As a quantitative example we study the area of health monitoring.
In this field, a question of interest is whether similar dynamical patterns are shared across a heterogeneous patient cohort, see for example \cite{li2014physiological}. The following, stylised example illustrates how such a setting could be incorporated in the framework proposed here.

The main quantity of interest is the patients \emph{health state}, which is not directly observable and therefore will be modelled as  signal $X$. Larger values of $X$ indicate better health state, bounded below by zero. Let us assume that 
\begin{align}
	dX_t = \alpha X_t \,dt + \beta X_t \, dB_t + 
	\displaystyle \sum_{i=1}^K X_{t-}\xi_i \delta_{T_i}(dt), \qquad t \ge 0,
\end{align}
with $\alpha, \beta, x_0 >0$. The parameter $\alpha$ can be used to model the deterioration ($\alpha <0$) or improvement ($\alpha >0$) of the health state.
In contrast to the Ornstein-Uhlenbeck example, $X$ behaves like a geometric Brownian motion in between the jump times and therefore remains non-negative. If  the $(\xi_i)$ were i.i.d.,\ it would be necessary to require $\xi_i \ge 0$ to keep non-negativity. However, if $\xi_i$ is allowed to depend on $X_{T_i-}$, we have much more flexibility. A particularly tractable example is when $\xi_i=X_{T_i-} \cdot \tilde \xi_i$ with $\tilde \xi_i \ge 0$ which directly would imply non-negativity of $X$, since then $X$ turns out to be a stochastic exponential, see for example Section I.4f in \cite{jacod2013limit}. This also includes the exponential of the Ornstein-Uhlenbeck process from Equation~ \eqref{eq:OU}.

The observation is given by the process $Y$  which is the (possibly at high frequency\footnote{A continuous observation could be included here without much additional difficulty(see \cite{bain2009crisan} for details on continuous-time filtering) -- which is however not our focus here.}) monitored \emph{health score}. 
The \emph{observation times} are placed on a discrete grid $0=s_0<s_1< \cdots < s_\ell$ for some fixed $\ell \in \N$ and we assume that $Y_{s_j}$ is as in Equation \eqref{eq:Y}.

In addition, the predictable times $(T_i)$ depend on the health score and model thresholds where interventions are taken (and therefore also influence $X$). More precisely, we assume that there is a scale of decreasing thresholds $\theta_K>\cdots > \theta_1$ and     $$T_i = \inf\{s_j \ge 0: Y_{s_j} \le \theta_i\}.$$
If the threshold $\theta_i$ is never reached, we set $T_i=+\infty$ by convention, meaning no intervention occurs at level $i$.
Of course, if the health score improves, there could be a reset in between, which we do not include for simplicity. 
If $\xi_i$ are log-normal and $Y$ depends linearly on $\log X$, then a Kalman-filter like setting is recovered here as well. Mostly, however, the situation will be more involved.

\subsection{Neural jump ODEs}\label{NODEs}
Consider a filtering problem where the signal process $X$ does not jump, i.e.\
\begin{equation*}
dX_t = a(X_t) \, dt + b(X_t) \, dB_t, \quad X_0 =x_0 \in \R^m.
\end{equation*}
 Observations occur at predictable times $T_1, \ldots, T_K$ with additive Gaussian noise, where we additionally assume $f(x,y)=h(x)$, i.e.\ the observation function does not depend on the pre-observation value of $Y$. More precisely,
\begin{equation*}
Y_t = \sum_{i=1}^K \ind{T_i \le t}(h(X_{T_i-}) + \eta_i),\quad \eta_i \sim \ccN(0,R)
\end{equation*}
where $\eta_i$ are i.i.d observation errors and  $h: \R^m \to \R^n$ is the observation function. 

The goal is to compute $\pi_t(\phi) = E[\phi(X_t)|\ccF^Y_t]$ for bounded test functions $\phi \in C_b^2(\R^m;\R)$. Since $c \equiv 0$, the term $\ccA \phi$ vanishes and our Theorem~\ref{Kushner-Stratonovich} applies to this setting and yields the explicit recursive equation
\begin{equation*}
        \pi_t(\phi)=\pi_0(\phi) + \int_0^t \pi_s(\ccL\phi)\,ds  + (S(\phi)*\tilde \mu)_t,\quad   t\ge 0.
    \end{equation*}
where $\ccL \phi(x)= a(x)\cdot\nabla \phi(x) + \frac{1}{2}\text{tr}(\sigma_X(x)\sigma_X(x)^\top \nabla^2 \phi(x)),$     
and the innovation term $(S(\phi)*\tilde \mu)_t$ captures the Bayesian update at each predictable observation time $T_i$. 

When the signal dynamics are unknown or high-dimensional, computing $\pi_t(\phi)$ analytically iss generally intractable. In such cases, following the Input-Output Neural Jump ODE (IO-NJODE) framework of \citet{heiss2025nonparametric}, with $U=Y$ as input and $V=X$ as output process, one can approximate the conditional expectation $E[X_t|\ccF^Y_t]=\pi_t(\text{id})$ by a neural architecture where a hidden state $H_t \in \R^{m}$ evolves according to:
\begin{align*}
H_0 &= \rho_{\theta_2}(x_0), \nonumber \\
dH_t &= f_{\theta_1}(H_{t-}, \Delta Y_{\tau(t)}, \tau(t), t-\tau(t)) \, dt + \left(\rho_{\theta_2}(\Delta Y_{t}) - H_{t-}\right) dJ_t'\\
G_t &= g_{\theta_3}(H_t), \label{eq:njode}
\end{align*}
where $f_{\theta_1}, \rho_{\theta_2}$ and $g_{\theta_3}$ are feedforward neural networks with trainable parameters $\theta = (\theta_1, \theta_2, \theta_3)$, $\tau(t) = \max\{T_i : T_i \leq t\}$ is the last observation time, and $J_t' = \sum_{i=1}^K \ind{T_i \leq t}$ counts the observations. The filtered estimate $G$ converges to $E[X_t|\ccF^Y_t]$ in $L^2$ as the network capacity increases, as established in \citet[Section 4]{heiss2025nonparametric}

This makes IO-NJODEs particularly suitable for applications where the signal dynamics are unknown or too complex for analytical treatment, yet training data (either historical or simulated under simplified assumptions) is available. 

\subsection{Credit risk with incomplete information}\label{credit risk}
A natural and economically important application of this framework arises in structural credit-risk modelling. In structural credit risk models such as  \cite{merton1974pricing}, the firm's asset value $V$ is assumed to be observable and default occurs when $V$ falls below the liability threshold $(K_t)_{t \ge 0}$. In practice, however, investors observe the firm value only indirectly through accounting releases, credit-rating updates, stock prices, or other  disclosures. \cite{duffie2001term}  formalized this idea of credit risk under incomplete information. Building on this insight and \cite{frey2009pricing}, one of our forthcoming work develops a structural model in which investors filter the firm's  asset process when both signal and the observation may jump at predictable announcement times. In the following, we illustrate how this model can be embedded into our framework. 

Let $X_t = \log V_t,$ $t \ge 0$, denote the unobserved log-asset value. Between scheduled announcement dates the assets evolve as geometric Brownian motion i.e. $dV_t = V_t(\mu_V dt + \sigma_V dB_t)$, so
\begin{align*}
    dX_t = \left(\mu_V - \dfrac{\sigma_V^2}{2}\right) dt + \sigma_V dB_t \quad t \in (T_{i-1},T_i).
\end{align*}
The predictable times $T_1,\ldots,T_K$ represent scheduled corporate events such as quarterly earnings announcements, coupon payments, or regulatory fillings. At these dates both the firm's asset value and the market's information may jump. At each announcement time $T_i$, 
$X_{T_i} = X_{T_i^-} + \tilde c(X_{T_i^-}, Y_{T_i^-}, \xi_i),$ where the jump function $\tilde c$ captures the magnitude of write-downs or revaluations. A convenient specification is 
\begin{align*}
    \tilde c(x,y,\xi) = \log (1-\xi(1+\beta \ind{y<\bar y})),
\end{align*}
with $\xi \in [0,\frac{1}{1+\beta})$ begin a random loss proportion and $\beta \ge 0$ begin an amplification factor reflecting that negative news has greater impact when the sentiment $y$ is already weak.

Investors observe noisy disclosures in the following form:
$$\Delta Y_{T_i}=f(X_{T_i^-}, Y_{T_i^-})+ \eta_i, \qquad \eta_i \sim \ccN(0,\sigma^2_\eta),$$
where $f(x,y)=x+\alpha_y y$ links current fundamentals with past public information. Additional indicators such as dividend payment $d_i$ may also be observed at a subset of the $T_i$, with conditional density $\nu_d(d_i|X_{T_i^-}, Y_{T_i^-})$. In addition, market participants observe default events, discrete disclosures, and cumulative dividend process $D$:
\begin{align}
    \ccF_t^Y := \sigma \big(\ind{\tau \le s}, D_s, Y_s: 0\le s \le t\big) \vee \ccN
\end{align}
where $K$ is the liability barrier. Note that this model  fits  into our  framework (see Theorem \ref{Kushner-Stratonovich}). In particular
\begin{align*}
    a(x) = \mu_V - \dfrac{\sigma_V^2}{2}, \quad b(x) = \sigma_V, \quad c(x)\xi=\tilde c(x,Y_{T_i^-},\xi).
\end{align*}
and the marks $Z_i=(\xi_i,\eta_i)$ are i.i.d and independent of $B$. The corresponding Kushner-Stratonovich equation with predictable jumps yields explicit Bayesian update rules for investor beliefs at each scheduled event.

\section{Proof of the main result}\label{sec4: proof of KS}
The first step in proving  Theorem \ref{Kushner-Stratonovich}, will be to derive suitable martingale representations in our setting. The key is the martingale representation in the observation filtration $\bbF^Y$. Since in classical filtering results, jumps arise at totally inaccessible stopping times, we will need to develop some suitable representation results for a filtration with jumps at predictable times.

\subsection{Martingale representations}
We start with a generalisation of Lemma 3.2 in \cite{frey2012pricing} to our  setting. Recall  the measure $\mu$, decoding the jumps in the observation filtration and introduced in Equation \eqref{eq:mu}, and its  $\bbF^Y$-compensated version $\tilde \mu$, introduced in  Equation \eqref{F Y-compensated random measure}.

\begin{proposition}\label{Martingale w.r.t F Y}
    For each real-valued $\bbF^Y$-martingale $M$, there exists an $\bbF^Y$-predictable function $S: \Omega \times \R_{\ge 0} \times \R^n  \to \R $ satisfying $P\big((S*\nu)_\infty <\infty\big)=1$, such that, $P$-a.s., 
\begin{align}
\label{eq:679} 
	M_t &= M_0 +(S * \tilde \mu)_t, \quad t \ge 0.
\end{align}
\end{proposition}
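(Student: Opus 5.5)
We will exploit that $\bbF^Y$ is a piecewise constant filtration generated by the marked point process $\mu$ with finitely many predictable jump times. The plan is to show that every $\bbF^Y$-martingale is constant on the stochastic intervals $[\![T_{i-1},T_i[\![$ (with $T_0:=0$, and on $[\![T_K,\infty[\![$). Its jump at $T_i$ is then a measurable function of $\Delta Y_{T_i}$ with $\ccF^Y_{T_i-}$-measurable coefficients, centred under $F^i$. Summing these jumps will produce exactly $S*\tilde\mu$.

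\textbf{Step 1: structure of the filtration.} First we will identify $\ccF^Y_t$ on $\{T_{i}\le t<T_{i+1}\}$ with $\ccF^Y_{T_i}=\sigma(T_1,\Delta Y_{T_1},\dots,T_i,\Delta Y_{T_i})\vee\ccN$. This uses that $Y$ is constant between the $T_j$. Since $T_{i+1}$ is $\bbF^Y$-predictable and $\ccF^Y$ acquires no new information strictly between jump times, $T_{i+1}$ is in fact $\ccF^Y_{T_i}$-measurable on $\{T_i<\infty\}$ (a predictable time of a jump filtration is announced from the last jump). Moreover, $\ccF^Y_{T_{i+1}-}=\ccF^Y_{T_i}$ and $\ccF^Y_{T_{i+1}}=\sigma(\ccF^Y_{T_{i+1}-},\Delta Y_{T_{i+1}})$, as already used in Remark~\ref{Remark S}. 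I expect this step to be the main obstacle. Rigorously, one must show that a predictable time for the filtration of a marked point process is $\ccF_{T_i}$-measurable on $\{T_i<T_{i+1}\}$. I would first try the standard description of stopping times of point-process filtrations (Jacod's lemma, cf.\ \cite{jacod2013limit}, III.1): $T_{i+1}\wedge t$ coincides on $\{T_i\le t\}$ with an $\ccF^Y_{T_i}$-measurable function. Predictability then rules out any dependence on the not-yet-observed mark.

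\textbf{Step 2: martingale between jumps.} Let $M$ be an $\bbF^Y$-martingale, taken c\`adl\`ag. For $s\le t$ on $\{T_i\le s<t<T_{i+1}\}$, Step 1 gives $\ccF^Y_s=\ccF^Y_t=\ccF^Y_{T_i}$ up to null sets. Hence $M_t=E[M_t|\ccF^Y_s]=M_s$, so $M$ is constant there and only jumps at the $T_i$. Consequently
\[
M_t=M_0+\sum_{i:T_i\le t}\Delta M_{T_i}.
\]

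\textbf{Step 3: form of the jumps.} Because $\Delta M_{T_i}$ is $\ccF^Y_{T_i}=\sigma(\ccF^Y_{T_i-},\Delta Y_{T_i})$-measurable, a Doob-type factorisation (predictable version, as in Remark~\ref{Remark S}) yields a $\tilde\ccP$-measurable $S$ with $\Delta M_{T_i}=S(T_i,\Delta Y_{T_i})$ and $S(t,\cdot)=0$ off $\bigcup_i[\![T_i]\!]$. Since $T_i$ is predictable and $M$ is a (uniformly integrable on the finite horizon) martingale, $E[\Delta M_{T_i}|\ccF^Y_{T_i-}]=0$. By Lemma~\ref{Compensated random measure} this reads $\int S(T_i,y)F^i(dy)=0$, so the $\nu$-part of $S*\tilde\mu$ vanishes and $S*\tilde\mu=S*\mu=\sum_{T_i\le t}\Delta M_{T_i}$, which is \eqref{eq:679}. (If integrability at $T_i$ is an issue, localise with $T_i$-stopped martingales; there are only finitely many jumps.) Finally, $(|S|*\nu)_\infty=\sum_i\int|S(T_i,y)|F^i(dy)=\sum_i E[|\Delta M_{T_i}|\,|\ccF^Y_{T_i-}]$. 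This is a finite sum of a.s.\ finite terms, which gives $P((S*\nu)_\infty<\infty)=1$.
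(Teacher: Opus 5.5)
Your proof is correct, but it takes a genuinely different route from the paper. The paper's proof is one line: $\mu$ is a multivariate point process, $\bbF^Y$ is the filtration it generates, and Jacod's general representation theorem (Theorem III.4.37 of the cited reference) gives $M=M_0+S*\tilde\mu$ with $|S|*\mu$ locally integrable. That route is short and never uses predictability of the $T_i$; it holds equally for totally inaccessible jump times. It is, however, a pure existence statement.

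Your argument is elementary and uses predictability essentially. It shows that every $\bbF^Y$-martingale is piecewise constant. This fails, for instance, for a compensated Poisson process, which is why Step 2 must rest on Step 1. It also produces an explicit integrand: the factorisation of $\Delta M_{T_i}$ through $\Delta Y_{T_i}$. This integrand is $F^i$-centred, so $S*\nu\equiv 0$, $S*\tilde\mu=S*\mu$, and finiteness of $(|S|*\nu)_\infty$ comes for free. Finally, it shows that $S(T_i,\cdot)$ is determined only up to adding an $\ccF^Y_{T_i-}$-measurable random variable, because $\nu(\{T_i\}\times\R^n)=1$. This non-uniqueness matters when $S(\phi)$ is identified in Theorem~\ref{Kushner-Stratonovich}.

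One correction in Step 1. The form of Jacod's lemma you quote, that $T_{i+1}\wedge t$ agrees on $\{T_i\le t\}$ with an $\ccF^Y_{T_i}$-measurable variable, is already the conclusion, and it is false without predictability (take $T_2$ of a Poisson process). The lemma actually says: for every $\bbF^Y$-stopping time $\tau$ there is an $\ccF^Y_{T_i}$-measurable $R$ with $\tau\wedge T_{i+1}=R\wedge T_{i+1}$ on $\{T_i\le\tau\}$. For $\tau=T_{i+1}$ this is vacuous.

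Apply it instead to an announcing sequence $\tau_n\uparrow T_{i+1}$ with $\tau_n<T_{i+1}$, obtaining $\ccF^Y_{T_i}$-measurable $R^n$. On $\{T_i\le\tau_n\}$ the strict inequality forces $\tau_n=R^n$. On $\{T_i<T_{i+1}\}$ you have $\tau_n>T_i$ for all large $n$, so $T_{i+1}=\limsup_n R^n$ there, which is $\ccF^Y_{T_i}$-measurable.

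An equivalent route: predictability means the compensator of $\ind{T_{i+1}\le t}$ is the process itself. Jacod's formula for this compensator, in terms of the $\ccF^Y_{T_i}$-conditional law of $T_{i+1}$, then forces that law to be a Dirac mass. So what predictability excludes is uncertainty about the next time, not dependence on the next mark.

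With this fixed, Steps 2 and 3 go through as written. In Step 2, state explicitly that $\{T_i\le s\}\cap\{t<T_{i+1}\}\in\ccF^Y_s$; that is exactly where the measurability of $T_{i+1}$ enters.
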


\begin{proof}
We observe that the random measure $\mu$ defined in Equation \eqref{eq:mu} is a multivariate point process in the sense of Definition III.1.23 in \cite{jacod1975multivariate}. Moreover, filtration $\bbF^Y$ satisfies assumption III.1.25 therein. Hence, we may apply Theorem III.4.37 and obtain representation \eqref{eq:679} where $|W|*\mu$ is locally integrable. 
\end{proof}

We now present a specific result concerning martingales with respect to the filtration $\bbF$. For every $\phi \in \mathcal{C}_b^{2}(\R^m; \R)$, define the process $M^\phi$ by:
\begin{equation} \label{Mat1}
    M^\phi_t = \phi (X_{t}) - \phi (X_0) - \int_0^t \ccL\phi(X_{s})ds - 
    \int_0^t \ccA \phi(X_{s-}) dA_s.
\end{equation} 

\begin{proposition}\label{Martingale w.r.t F}
    Suppose Assumption \ref{assumption 1} holds and let $\phi \in \mathcal{C}_b^{2} ( \R^m;\R)$. Then the process $M^\phi$, defined in \eqref{Mat1},
is an $\bbF$-martingale.
\end{proposition}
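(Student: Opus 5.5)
We apply Itô's formula for semimartingales to $\phi(X)$, with $X$ given by \eqref{sys: 2}. Since $\phi\in\mathcal C_b^2$ and the continuous part of $X$ is $\int a(X_s)ds+\int b(X_s)dB_s$, we obtain
\begin{align*}
\phi(X_t)-\phi(X_0) &= \int_0^t \ccL\phi(X_s)\,ds + \int_0^t \nabla\phi(X_s)^\top b(X_s)\,dB_s \\
&\quad + \int_0^t\int_{\R^{m+n}} \big(\phi(X_{s-}+c(X_{s-})\xi)-\phi(X_{s-})\big)\,\psi(ds,d\xi\otimes d\eta).
\end{align*}
Because $J$ has only finitely many jumps and is of finite variation, the jump part needs no compensating gradient term: the sum of jumps of $\phi(X)$ is exactly the $\psi$-integral above.

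Next we compensate the $\psi$-integral. Set $W(s,\xi,\eta):=\phi(X_{s-}+c(X_{s-})\xi)-\phi(X_{s-})$. It is $\bbF$-predictable and bounded by $2\|\phi\|_\infty$. The $\bbF$-compensator of $\psi$ is $\sum_{i}\delta_{T_i}(dt)F_Z(dz)$, so
$$W*\psi - W*\psi^{p} \quad\text{with}\quad (W*\psi^{p})_t=\int_0^t \ccA\phi(X_{s-})\,dA_s .$$
Since $|W|*\psi_\infty\le 2K\|\phi\|_\infty$ is bounded, $W*(\psi-\psi^p)$ is a true $\bbF$-martingale, not merely a local one. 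The identification $\psi^p$ requires that $T_i$ be $\bbF$-predictable and that $Z_i$ be independent of $\ccF_{T_i-}$. Predictability in $\bbF^Y\subset\bbF$ gives the first. For the second, we argue as in Lemma \ref{Compensated random measure}: $\ccF_{T_i-}$ is generated by $B$ up to $T_i$ and $Z_1,\dots,Z_{i-1}$, and $Z_i$ is independent of these. This is the same computation as in that lemma, with $\eta$ replaced by $Z$.

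It remains to treat the Brownian integral. By the chain rule, $\nabla\phi$ is bounded, so $E\int_0^t|\nabla\phi(X_s)^\top b(X_s)|^2ds\le C\,E\int_0^t\|b(X_s)\|^2ds<\infty$ by Assumption \ref{assumption 1}. Hence $\int\nabla\phi^\top b\,dB$ is a square-integrable $\bbF$-martingale, provided $B$ is an $\bbF$-Brownian motion. This holds because $B$ is independent of $(Z_i)$ and $\bbF$ is generated by $X,Y$, which are functionals of $B$ and the $Z_i$.

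The integrability of $\int\ccL\phi(X_s)ds$ follows from the bounds on $a$ and $b$ in Assumption \ref{assumption 1}. Subtracting the compensators then shows that $M^\phi_t$ equals the sum of the two martingales above, which proves the claim.

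The main obstacle is the rigorous identification of the $\bbF$-compensator of $\psi$ and of $B$ as an $\bbF$-Brownian motion. Both come down to the independence of $Z_i$ from $\ccF_{T_i-}$, which is delicate because the $T_i$ themselves depend on the past observations. I would handle this by working inductively on the stochastic intervals $(T_{i-1},T_i]$.
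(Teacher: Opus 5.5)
Your proof is correct and follows the paper's argument: It\^o's formula applied to $\phi(X)$ via \eqref{sys: 2}, compensation of the $\psi$-integral by $F_Z(dz)\,dA_s$, and two martingale bounds. The compensated jump part is a bounded, hence true, martingale because its integrand is bounded by $2\|\phi\|_\infty$ and there are at most $K$ jumps. The Brownian integral is square-integrable by Assumption \ref{assumption 1}.

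You go further than the paper by flagging why $\psi$ has this $\bbF$-compensator and why $B$ is a Brownian motion for $\bbF$, both of which the paper takes for granted. One small caveat: $B$ need not be $\bbF$-adapted when $b$ is degenerate. This is harmless, since you can prove the martingale property in a larger filtration where $B$ is a Brownian motion and $\psi$ has compensator $F_Z(dz)\,dA_s$, and then condition down to $\bbF$, using that $M^\phi$ is $\bbF$-adapted.
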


\begin{proof}
Starting from Equation \eqref{sys: 2}, we apply  It\^o's formula to $\phi(X)$, which gives
   \begin{align*}
    \phi(X_t)&=\phi(X_0)+ \int_0^t \ccL \phi(X_s) \, ds + \int_0^t \nabla_x \phi(X_s)^\top b(X_s)\, dB_s \\
    & \qquad \qquad+\int_0^t\int_{\R^{m+n}}  \big(\phi(X_{s-}+c(X_{s-})\xi) - \phi(X_{s-})\big)\, \psi(ds,dz).
\end{align*}
Hence, using the representation of $\ccA$ in Equation \eqref{eq: ccA}, 
we obtain
\begin{equation}\label{Martingale expression}
    M^\phi_t =  \int_0^t \nabla_x \phi(X_s)^\top b(X_s)\, dB_s +\int_0^t\int_{\R^{m+n}}  \big(\phi(X_{s-}+c(X_{s-})\xi) - \phi(X_{s-})\big)\, q(ds,dz),
\end{equation}
where $q(ds,dz)=\psi(ds,dz)-F_Z(dz)dA_s$ is the compensated random measure. 
Note that the first summand is a square-integrable martingale since $\phi \in C_b^{2}$ implies $\|\nabla_x\phi\|_\infty <\infty$ and\\ $E[\int_0^t \|b(X_s)\|^2ds]<\infty$ by Assumption \ref{assumption 1}.

Since by assumption $|\phi(\cdot)| \le C < \infty$, we obtain that $\tilde W:=(\phi(X_{s-}+c(X_{s-})\xi) - \phi(X_{s-}))$ satisfies  $|W| \le 2C$, and hence
$$
(\tilde W*\psi)_\infty \le 2C\cdot K
$$
which implies also that $(|\tilde W| *q)_\infty < \infty$ and hence, the second summand is bounded and therefore also a martingale.
\end{proof}

\subsection{Proof of Theorem \ref{Kushner-Stratonovich}}\label{sec:proof KS}

We start with a preliminary lemma, which states that all terms on the right hand side in Equation  \eqref{6} are finite.

\begin{lemma}\label{lem:conditions}
	Under Assumption \ref{assumption 1}, for every $\phi \in \mathcal{C}_b^{2}(\R^m; \R)$, the following holds:
	\begin{align*}
		E\bigg[\int_0^T \lvert \ccL \phi(X_t)\rvert dt \bigg] +
		E\bigg[\int_0^T \lvert \pi_t(\ccL \phi)\rvert dt \bigg] & < \infty, \qquad T \ge 0,\\
		E\bigg[\int_0^T \lvert \ccA \phi(X_{t-})\rvert dA_t \bigg] + 
		E\bigg[\int_0^T \lvert \pi_{t-}(\ccA \phi) \rvert dA_t \bigg] & < \infty, \qquad T \ge 0,\\
		E\big[\displaystyle (|S(\phi)|*\nu)_\infty\big] & < \infty.
	\end{align*}
\end{lemma}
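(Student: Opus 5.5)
The plan is to handle the three lines one at a time. Each time, the signal-level bound comes first and is then passed to the filter through conditional Jensen and the tower property.

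\emph{First line.} Write $\ccL\phi$ explicitly using \eqref{eq:ccL}. Because $\phi\in\mathcal C_b^2$, there is a constant $C_\phi$ with $|\ccL\phi(x)|\le C_\phi\big(|a(x)|+\|b(x)\|^2\big)$, where $\sigma=bb^\top$ gives $|\sigma_{ij}|\le\|b\|^2$. Assumption~\ref{assumption 1} then yields $E[\int_0^T|\ccL\phi(X_t)|dt]<\infty$. For the filtered term, choose the optional version of $\pi(\ccL\phi)$; for the measurability issues, appeal to \citet[Thm.~2.24]{bain2009crisan}. Then $|\pi_t(\ccL\phi)|\le E[|\ccL\phi(X_t)|\mid\ccF^Y_t]$ a.s. for each $t$, and Fubini gives $E\int_0^T|\pi_t(\ccL\phi)|dt\le\int_0^T E|\ccL\phi(X_t)|dt<\infty$. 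The point to check is that the a.s. inequality holds $dt\otimes dP$-a.e., which follows from joint measurability and Fubini. Note that $\ccL\phi$ may be unbounded, so $\pi_t(\ccL\phi)$ should be understood as the conditional expectation of an integrable variable, which exists for a.e. $t$.

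\emph{Second line.} This one is easy. We have $|\ccA\phi(x)|\le 2\|\phi\|_\infty$ and $A_T\le K$, so $\int_0^T|\ccA\phi(X_{t-})|dA_t\le 2K\|\phi\|_\infty$. The same bound holds for $|\pi_{t-}(\ccA\phi)|$, because $\pi_{t-}(\ccA\phi)$ is the left limit, or predictable version, of a process bounded by $2\|\phi\|_\infty$. No part of Assumption~\ref{assumption 1} beyond boundedness is needed here.

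\emph{Third line.} By Remark~\ref{Remark S}, $S(\phi)(t,y)$ is the predictable function with $S(\phi)(T_i,\Delta Y_{T_i})=E[\Delta\phi(X_{T_i})\mid\ccF^Y_{T_i}]$. Since $|\Delta\phi(X_t)|\le 2\|\phi\|_\infty$, we may take a version with $|S(\phi)|\le 2\|\phi\|_\infty$. If needed, truncate: replacing $S$ by $(S\wedge 2\|\phi\|_\infty)\vee(-2\|\phi\|_\infty)$ changes it only on a $\nu$-null set, because by Lemma~\ref{Compensated random measure} $F^i$ is the conditional law of $\Delta Y_{T_i}$, so a.s. bounds at $(T_i,\Delta Y_{T_i})$ transfer to $F^i$-a.e. bounds. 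Then
\[
(|S(\phi)|*\nu)_\infty\le 2\|\phi\|_\infty\sum_{i=1}^K F^i(\R^n)=2K\|\phi\|_\infty,
\]
and this also matches the first condition of Assumption~\ref{assumption 1}.

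The main obstacle is this last transfer: an a.s. statement about $S$ evaluated at the jump has to be turned into a $\nu$-a.e. bound. I would do it by applying the compensator identity $E[(W*\mu)_\infty]=E[(W*\nu)_\infty]$ with $W=\Ind_{\{|S|>2\|\phi\|_\infty\}}$. This shows $E[(W*\nu)_\infty]=0$, so the set where the bound fails is $\nu$-null.
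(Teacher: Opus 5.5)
Your proof is correct and follows the same route as the paper. The paper bounds $|\ccL\phi|$ by $C(|a|+\|b\|^2)$ and uses Assumption~\ref{assumption 1} and conditional Jensen for the first line, the uniform bound $2\|\phi\|_\infty$ with $A_T\le K$ for the second, and $|S(\phi)|\le 2\|\phi\|_\infty$ with $\nu$ of total mass at most $K$ for the third; your additional care (absolute values on $\sigma_{ij}$, Fubini for the filtered term, and transferring the a.s.\ bound on $S(\phi)$ at the jump points to a $\nu$-a.e.\ bound via the compensator identity) fills in details the paper treats as immediate.
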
 
\begin{proof}
	Since $\phi \in \mathcal{C}_b^{2}$, $\nicefrac{\partial \phi}{\partial x}$ is bounded and therefore $|\ccL \phi(X_t)| \le C_1 (\sum_{i=1}^m|a_i(X_t)| + \sum_{i,j=1}^m \sigma_{ij} (X_t))$ for some $C_1 >0$ and the first claim readily follows from Assumption \ref{assumption 1}.

    For the second claim, observe that  $\phi(x_1)-\phi(x_2) \le 2 \lVert \phi \rVert_\infty$ for any $x_1,x_2 \in \R^m$, such that
 $$|\ccA \phi(X_{t-})|=\bigg|\int_{\R^{m+n}} \big(\phi(X_{t-}+c(X_{t-})\xi) - \phi(X_{t-})\big)F_Z(d\xi \otimes d\eta) \bigg|\le 2 \lVert \phi \rVert_\infty$$
and hence
$$\int_0^T \lvert \ccA \phi(X_{t-})\rvert dA_t = \sum_{i=1}^K |\ccA \phi(X_{T_{i}^{-}})| \le 2 K\lVert \phi \rVert_\infty.$$
Using Jensen's inequality, we obtain $|\pi_{t-}(\ccA \phi)|\le \pi_{t-}(|\ccA \phi|) \le 2 \lVert \phi \rVert_\infty,$ such that the second claim follows.

	For the third claim, using  \eqref{def:S}, and again $|\Delta \phi| \le 2 \lVert \phi \rVert_\infty $, we obtain that
$$|S(\phi)(t,y)| \le  E[|\Delta \phi(X_t)|\, |\ccF_{t-}^Y,\Delta Y_{t}=y] \le 2 \lVert \phi \rVert_\infty $$
and hence $E\big[\displaystyle (|S(\phi)|*\nu)_\infty\big]\le 2 K \lVert \phi \rVert_\infty$.
\end{proof}

The following lemmas are well-known results which we will use frequently. For the convenience of the reader we provide a short proof for each. To this end, we will write ${}^o\!H$ for the $\bbF^Y$-optional projection of a progressively measurable process $H$, satisfying $E[|H_t|]<\infty$ for all $t$, defined as the unique optional process such that, for any $\bbF^Y$-stopping time $T$, $({}^o\!H)_T=E[H_T|\ccF_T^Y]$ a.s. on $\{T <\infty\}$.
\begin{lemma}\label{dif:clarrification of difference martingale}
    Consider an $\bbF^Y$-progressive process $H$  satisfying
    $
       E[\int_0^t \lvert H_s \rvert \, ds ] < \infty $ for all $t \ge 0$. 
    Then there exists a c\`adl\`ag and adapted $\bbF^Y$-martingale $M$ which satisfies 
    $$
        M_t =E\Big[ \int_0^t H_s \, ds|\ccF_t^Y\Big]-\int_0^t ({}^o\!H)_s \, ds, \qquad t \ge 0.
    $$ 
\end{lemma}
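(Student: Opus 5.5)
Define $M$ by the displayed formula, with the conditional expectation term taken as the optional projection of the process $t\mapsto \int_0^t H_s\,ds$ (the $\bbF^Y$-optional projection of a continuous, integrable process). This gives a version that is optional, and we will show that it is a martingale; a c\`adl\`ag modification then exists because $\bbF^Y$ satisfies the usual conditions. Integrability of $M_t$ is immediate: $E|M_t|\le 2E[\int_0^t|H_s|\,ds]<\infty$, using $E[|({}^o\!H)_s|]\le E[|H_s|]$ and Fubini.

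The main step is the martingale property. Fix $s\le t$ and write
\[
M_t = E\Big[\int_0^s H_r\,dr\Big|\ccF^Y_t\Big] + E\Big[\int_s^t H_r\,dr\Big|\ccF^Y_t\Big] - \int_0^s ({}^o\!H)_r\,dr - \int_s^t ({}^o\!H)_r\,dr .
\]
Conditioning on $\ccF^Y_s$ and using the tower property, the first and third terms give exactly $M_s$. It therefore suffices to show
\[
E\Big[\int_s^t H_r\,dr\Big|\ccF^Y_s\Big]=E\Big[\int_s^t ({}^o\!H)_r\,dr\Big|\ccF^Y_s\Big].
\]
For this, take $\Gamma\in\ccF^Y_s$ and apply Fubini, which is justified by the integrability assumption:
\[
E\Big[\Ind_\Gamma\int_s^t H_r\,dr\Big]=\int_s^t E[\Ind_\Gamma H_r]\,dr = \int_s^t E\big[\Ind_\Gamma E[H_r|\ccF^Y_r]\big]\,dr=\int_s^t E[\Ind_\Gamma ({}^o\!H)_r]\,dr .
\]
The middle equality holds because $\Gamma\in\ccF^Y_s\subset\ccF^Y_r$ for $r\ge s$. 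The last equality uses the defining property of the optional projection at the deterministic stopping time $r$. Applying Fubini once more yields the claim.

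The main obstacle is measure-theoretic. The identity $({}^o\!H)_r=E[H_r|\ccF^Y_r]$ holds only almost surely for each fixed $r$, so one must make sure that $\int_0^t ({}^o\!H)_r\,dr$ is a well-defined adapted process. Joint measurability of ${}^o\!H$, which is optional and hence progressive, makes this integral $\ccF^Y_t$-measurable and continuous in $t$. This is why we use the optional projection rather than an arbitrary choice of conditional expectations at each time. With this in place, the fixed-time Fubini argument above goes through, and the c\`adl\`ag version follows from the usual conditions.
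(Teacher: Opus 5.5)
Your proof is correct, but it takes a different route from the paper's.

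\textbf{The paper's route.} The paper argues abstractly. It invokes He, Wang and Yan (Theorem V.5.25) to identify $\int_0^\cdot ({}^o\!H)_s\,ds$ as the (dual) optional projection of the raw finite-variation process $\int_0^\cdot H_s\,ds$. It then appeals to Corollary V.5.31, applied to $\Ind_{[0,t]}H$. This is the general fact that the optional projection of a raw integrable increasing process minus its dual optional projection is a martingale.

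\textbf{Your route.} You verify the martingale property by hand. The $[0,s]$ pieces are handled by the tower property. For the increment you prove $E[\Ind_\Gamma\int_s^t H_r\,dr]=E[\Ind_\Gamma\int_s^t ({}^o\!H)_r\,dr]$ for $\Gamma\in\ccF^Y_s$, using two applications of Fubini and the fixed-time property of the optional projection. This identity is exactly the content of the dual-projection statement restricted to deterministic intervals.

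\textbf{What each approach buys.} The paper's citation is shorter and gives more. It yields the identity at all stopping times, so the optional version is already c\`adl\`ag up to indistinguishability, and it covers arbitrary raw increasing processes. The cost is that the reader must check the hypotheses of the cited results. Your argument is self-contained and uses only Fubini and conditional expectations. It also makes explicit why the optional projection is needed rather than an arbitrary choice of $E[H_r|\ccF^Y_r]$: its progressive measurability is what makes $\int_0^t({}^o\!H)_r\,dr$ an $\ccF^Y_t$-measurable, continuous process.

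\textbf{A small point to tighten.} The hypothesis only guarantees $E|H_r|<\infty$ for Lebesgue-a.e.\ $r$. So ${}^o\!H$ should be read as ${}^o(H^+)-{}^o(H^-)$, set to $0$ where undefined. Your middle equality then holds for a.e.\ $r\in[s,t]$, which is all the $dr$-integral requires. The paper glosses over the same point, since it defines ${}^o\!H$ only for processes with $E|H_t|<\infty$ for every $t$.
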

\begin{proof}
By Theorem V.5.25 in \cite{he1992semimartingale}, the optional projection of $\int_0^\cdot H_s \,ds$ is given by $\int_0^\cdot ({}^o\!H)_s \,ds$. The claim now follows from Corollary V.5.31.2) by considering $\Ind_{[0,t]}H$ with  $t<\infty$. %
\end{proof}

\begin{lemma}\label{dif: clarrification of finite variation term}
    Let $N$ be $\bbF^Y$-adapted, bounded and $\phi \in \mathcal{C}_b^{2}(\R^m;\R)$. Then the process
    $$M_t := E\bigg[ \int_0^t N_{s-} \ccA \phi(X_{s-}) dA_s\,\Big|\,\ccF_t^Y\bigg]-\int_0^t N_{s-} E[ \ccA \phi(X_{s-})\,|\,\ccF_{s-}^Y] \,dA_s, \quad \quad t \ge 0$$
    is an $\bbF^Y$-martingale.
\end{lemma}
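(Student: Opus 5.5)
The plan is to check directly that $E[M_t - M_s \mid \ccF_s^Y]=0$ for $s\le t$, after rewriting both integrals as finite sums over the predictable times $T_1,\dots,T_K$.

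\emph{Rewriting as sums.} Since $dA_s=\sum_i \delta_{T_i}(ds)$, the two processes read
$$\int_0^t N_{s-}\ccA\phi(X_{s-})\,dA_s=\sum_{i=1}^K \ind{T_i\le t} N_{T_i-}\ccA\phi(X_{T_i-}),$$
$$\int_0^t N_{s-}E[\ccA\phi(X_{s-})\mid\ccF_{s-}^Y]\,dA_s=\sum_{i=1}^K \ind{T_i\le t}N_{T_i-}E[\ccA\phi(X_{T_i-})\mid\ccF^Y_{T_i-}].$$
All terms are bounded: $|\ccA\phi|\le 2\|\phi\|_\infty$ (as in Lemma \ref{lem:conditions}), $N$ is bounded, and $K$ is finite. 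So integrability is automatic. Write $U_t$ for the first sum and $V_t$ for the second; $V$ is $\bbF^Y$-adapted and predictable.

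\emph{Martingale property.} Since $E[E[U_t\mid\ccF^Y_t]\mid\ccF^Y_s]=E[U_t\mid\ccF_s^Y]$, it suffices to show $E[U_t-U_s\mid\ccF^Y_s]=E[V_t-V_s\mid\ccF^Y_s]$. This in turn follows from the termwise identity
$$E\big[\ind{s<T_i\le t}N_{T_i-}\ccA\phi(X_{T_i-})\mid\ccF^Y_s\big]=E\big[\ind{s<T_i\le t}N_{T_i-}E[\ccA\phi(X_{T_i-})\mid\ccF^Y_{T_i-}]\mid\ccF^Y_s\big]$$
for each $i$. To prove it, test against $\Lambda\in\ccF^Y_s$ bounded. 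The key fact is that $\Lambda\ind{s<T_i\le t}N_{T_i-}$ is $\ccF^Y_{T_i-}$-measurable:
\begin{itemize}
\item $\Lambda\ind{s<T_i}$ belongs to $\ccF^Y_{T_i-}$ by the definition of $\ccF_{T-}$;
\item $\{T_i\le t\}\in\ccF^Y_{T_i-}$ because $T_i$ is $\bbF^Y$-predictable;
\item $N_{T_i-}$ is $\ccF^Y_{T_i-}$-measurable, since $N_-$ is predictable (here I take $N$ c\`adl\`ag, or else interpret $N_{-}$ as a predictable version).
\end{itemize}
The tower property over $\ccF^Y_{T_i-}$ then gives the identity. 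Summing over $i$ yields $E[M_t-M_s\mid\ccF^Y_s]=0$. Adaptedness of $M$ is clear.

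\emph{Main subtlety.} The delicate point is the measurability of $N_{T_i-}$ and of $\ind{T_i\le t}$ with respect to $\ccF^Y_{T_i-}$. The second uses predictability of $T_i$ in $\bbF^Y$, which is assumed. For the first, if $N$ is merely adapted and not c\`adl\`ag, I would replace $N_-$ by the predictable projection, or note that in this piecewise constant filtration adapted processes have left limits.

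An alternative, more structural route would identify $\ind{T_i\le\cdot}E[\ccA\phi(X_{T_i-})\mid\ccF^Y_{T_i-}]$ as the dual predictable projection of $\ind{T_i\le\cdot}\ccA\phi(X_{T_i-})$ (Theorem V.5.25 in \cite{he1992semimartingale}, as used for Lemma \ref{dif:clarrification of difference martingale}). I would use the elementary computation above instead.
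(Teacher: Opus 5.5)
Your proposal is correct and follows the same route as the paper: rewrite both integrals as finite sums over the $T_i$ and apply the tower property through $\ccF^Y_{T_i-}$. Your version is in fact more careful than the paper's write-up, because you keep the random indicator $\ind{s<T_i\le t}$ inside the conditional expectations and justify the $\ccF^Y_{T_i-}$-measurability of $\Lambda\ind{s<T_i\le t}N_{T_i-}$ explicitly.

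Two minor points. First, $\{T_i\le t\}\in\ccF^Y_{T_i-}$ holds for every stopping time, so predictability is not needed there. Second, your aside that adapted processes in this filtration automatically have left limits is false in general, since a deterministic function without left limits is adapted; nothing depends on it, however, because the statement presupposes $N_-$ and $N$ is c\`adl\`ag where the lemma is applied.
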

\begin{proof}
    Since $dA_t = \sum_{i=1}^K \delta_{T_i}(dt)$,
    $$M_t = \sum_{i=1}^K \ind{T_i \le t} \bigg( E[N_{T_{i}^{-}}\ccA \phi(X_{T_{i}^{-}}) |\ccF_t^Y]- N_{T_{i}^{-}}E[\ccA \phi(X_{T_{i}^{-}}) |\ccF_{T_{i}^{-}}^Y] \bigg).$$
    
    Set $X_{T_{i}^{-}}=N_{T_{i}^{-}}\ccA \phi(X_{T_{i}^{-}})$. Using the tower rule, we obtain that for $0 \le s \le t$,
    \begin{align*}
        E[M_t -M_s|\ccF_s^Y] &= \sum_{i:T_i \in (s,t]} E \bigg[E[X_{T_{i}^{-}} |\ccF_t^Y] - N_{T_{i}^{-}}E[\ccA \phi(X_{T_{i}^{-}}) |\ccF_{T_{i}^{-}}^Y] \,\Big| \, \ccF_s^Y \bigg] \\
        &= \sum_{i:T_i \in (s,t]} \bigg( E[X_{T_{i}^{-}} |\ccF_s^Y]-E \big[ N_{T_{i}^{-}}E[\ccA \phi(X_{T_{i}^{-}}) |\ccF_{T_{i}^{-}}^Y] \big|\ccF_s^Y \big]\bigg).
    \end{align*}
    In addition we have that
    \begin{align*}
        E[X_{T_{i}^{-}} |\ccF_s^Y]= E \big[ E[X_{T_{i}^{-}} |\ccF_{T_{i}^{-}}^Y]\big|\ccF_s^Y \big]= E \big[N_{T_{i}^{-}}E[\ccA \phi_{T_{i}^{-}} |\ccF_{T_{i}^{-}}^Y]\big|\ccF_s^Y \big],
    \end{align*}
    such that $E[M_t -M_s|\ccF_s^Y]=0.$
\end{proof}

\bigskip

We are now ready for the proof of the Kushner-Stratonovich equation.

\begin{proof}[Proof of Theorem \ref{Kushner-Stratonovich}] 
Consider $\phi \in C_b^{2}$, and recall from Proposition \ref{Martingale w.r.t F} that $M^\phi$, given in \eqref{Mat1} is a $\bbF$-martingale. For simplicity, we  write short $\phi_t=\phi(X_t)$.
Taking conditional expectations on \eqref{Mat1} we therefore obtain 
\begin{align}
   \label{p2} E[\phi_t|\ccF_t^Y]
 &=E[\phi_0|\ccF_t^Y] + E\bigg[\int_0^t\ccL \phi_s ds\Big|\ccF_t^Y\bigg]
     + E\bigg[\int_0^t  \ccA \phi_{s-}dA_s \Big|\ccF_t^Y\bigg] \nonumber \\
     & + E[M^\phi_t|\ccF_t^Y].
\end{align}
Under Assumption \ref{assumption 1} we may apply 
Lemma \ref{dif:clarrification of difference martingale} due to Lemma \ref{lem:conditions} and obtain
\begin{align}
	E\bigg[\int_0^t\ccL \phi_s \, ds\Big|\ccF_t^Y\bigg] &=
	\int_0^tE\big[\ccL \phi_s ds|\ccF_s^Y\big] \, ds + m_t^1 
	=\int_0^t \pi_s\big(\ccL \phi \big) \, ds + m_t^1  , \qquad t \ge 0
\end{align}
with an $\bbF^Y$-martingale $m^1$.
Similarly, we apply 
Lemma \ref{dif: clarrification of finite variation term} with the aid of Lemma \ref{lem:conditions} and obtain
\begin{align}
	E\bigg[\int_0^t  \ccA \phi_{s-}\, dA_s \Big|\ccF_t^Y\bigg] &=
	\int_0^t \pi_{s-}\big(  \ccA \phi \big) \, dA_s + m_t^2, \qquad t  \ge 0,
\end{align}
with an $\bbF^Y$-martingale $m^2$. Moreover, since $X_0=x_0$ is deterministic, we have $E[\phi_0|\ccF_t^Y]=\phi_0=\pi_0(\phi)$. 
We add the $\bbF^Y$-martingales to $M_t:= E[M^\phi_t|\ccF_t^Y] + m_t^1+m_t^2$, $t \ge 0$ and, by Proposition \ref{Martingale w.r.t F Y},
there exists a function $S$, such that $ M= (S*\tilde \mu)$. 
Altogether, we obtain that
\begin{align}
   \label{eq:temp pi} 
   \pi_t(\phi) &=\pi_0(\phi) + \int_0^t \pi_s\big(\ccL \phi \big) \, ds 
     + \int_0^t \pi_{s-}\big(  \ccA \phi \big) \, dA_s + (S*\tilde \mu)_t, \qquad t \ge 0.
\end{align}

The next step is to obtain the desired representation of $S$. 
To achieve this, we consider an $\bbF^Y$-martingale $N=(T*\tilde \mu)$ with a bounded and predictable function $T:\Omega \times \R_{\ge 0} \times \R^n\to \R$.
 By the product rule, we have
\begin{align}
   \nonumber \phi_t N_t &= (\phi_- \cdot N)_t + (N_- \cdot \phi)_t +  [N,\phi]_t\\
  \label{rule 1} &= (\phi_- \cdot T * \tilde \mu)_t + \int_0^t N_{s-} \ccL \phi_s ds + \int_0^t N_{s-} \ccA \phi_s dA_s\\
   \nonumber &  + \int_0^t N_{s-} dM_s^\phi + \sum_{s\le t}\Delta \phi_s \Delta N_s.
\end{align}

We start by considering the last term. Note that $\Delta \phi_s \Delta N_s=0$ for $s \not \in\{T_1,\dots,T_K\}$, such that we may directly concentrate on $ \Delta \phi_{T_i} \Delta N_{T_i}$. 
We need to compute its projection to $\bbF^Y$. Denote 
\begin{align}\label{def:F1}
	F^1(T_i,\Delta Y_{T_i}) &:= 
	E[\Delta \phi_{T_i} \Delta N_{T_i}|\ccF_{T_i^-}^Y,\Delta Y_{T_i}], 
\end{align}
and  let $F^1(t,y) := \sum_{i=1}^K \ind{t=T_i} F^1(t,y)$,
such that the function $F^1(s,y)$ is $\bbF^Y$-predictable. 

Hence,
\begin{align*}
	E[\Delta \phi_s \Delta N_s|\ccF_s^Y] &= \int_{\R^n} F^1(s,y) \, \mu(\{s\},dy)
\end{align*}
and we obtain that for all $t \ge 0$
\begin{align}\label{p4}
    E[\phi_tN_t|\ccF_t^Y] &= \int_0^t N_{s-} \pi_{s-}(\ccL \phi)\,ds + \int_0^t N_{s-} \pi_{s-}(\ccA \phi)\,dA_s + (F^1 * \mu)_t  + m^3_t,
\end{align}
with an $\bbF^Y$-martingale $m^3$.

On the other hand, 
\begin{align}\label{p5}
    \pi_t(\phi)N_t &= (N_- \cdot \pi(\phi))_t + (\pi_-(\phi) \cdot N)_t + [N,\pi(\phi)]_t\\
 \nonumber   &= \int_0^t N_{s-} \pi_{s-}(\ccL \phi)ds + \int_0^t N_{s-} \pi_{s-}(\ccA \phi)dA_s + \int_0^t \int_{\R^n} N_{s-} S(s,y) \, \tilde\mu(ds,dy) \\
\nonumber &+ \sum_{s\le t}\Delta \pi_s(\phi) \Delta N_s 
           + \int_0^t \int_{\R^n} \pi_{s-}(\phi) T(s,y)\, \tilde\mu(ds,dy).
\end{align}
This time, we compute from Equation \eqref{eq:temp pi} and the definition of $N$ that for $t \in \{T_1,\dots,T_k\}$,
\begin{align*}
	\Delta \pi_t(\phi) \Delta N_t &= 
	\pi_{t-}\big(  \ccA \phi) \big) \cdot \Delta N_{t}  
    + \Delta (S*\tilde \mu)_t \cdot \Delta (T*\tilde \mu)_t. 
\end{align*}
Note that, again for $t \in \{T_1,\dots,T_k\}$,
\begin{align*}
    \lefteqn{\Delta (S*\tilde \mu)_t \cdot \Delta (T*\tilde \mu)_t = \int_{\R^n} S(t,y) \tilde \mu(\{t\},dy) \cdot  \int_{\R^n} T(t,y) \tilde \mu(\{t\},dy) } \quad \\
    &= \int_{\R^n} S(t,y)  \mu(\{t\},dy) \cdot  \int_{\R^n} T(t,y)  \mu(\{t\},dy) 
    -\int_{\R^n} S(t,y)  \mu(\{t\},dy) \cdot  \int_{\R^n} T(t,y)  \nu(\{t\},dy) \\
    &- \int_{\R^n} S(t,y)  \nu(\{t\},dy) \cdot  \int_{\R^n} T(t,y)  \mu(\{t\},dy) 
    + \int_{\R^n} S(t,y)  \nu(\{t\},dy) \cdot  \int_{\R^n} T(t,y)  \nu(\{t\},dy)\\
    &= \Delta (S \cdot T * \tilde \mu)_t + \Delta (S \cdot T *  \nu)_t \\
    &-\int_{\R^n} S(t,y)  \tilde \mu(\{t\},dy) \cdot  \int_{\R^n} T(t,y)  \nu(\{t\},dy)  -\int_{\R^n} S(t,y)  \nu(\{t\},dy) \cdot  \int_{\R^n} T(t,y)  \nu(\{t\},dy)\\
    &- \int_{\R^n} S(t,y)  \nu(\{t\},dy) \cdot  \int_{\R^n} T(t,y)  \tilde \mu(\{t\},dy) - \int_{\R^n} S(t,y)  \nu(\{t\},dy) \cdot  \int_{\R^n} T(t,y)  \nu(\{t\},dy) \\
    &+ \int_{\R^n} S(t,y)  \nu(\{t\},dy) \cdot  \int_{\R^n} T(t,y)  \nu(\{t\},dy)\\
    &= \Delta m^4_t + \Delta (S \cdot T *  \nu)_t- \big( \Delta (S*\nu)_t \cdot \Delta(T*\nu)_t \big)
\end{align*}
with martingale part $m^4$, such that
\begin{align}
	\sum_{s \le t}\Delta \pi_s(\phi) \Delta N_s &= 
	(\pi_{-}(  \ccA \phi)\cdot N \big)_t   + m^4_t 
    +  (S \cdot T *  \nu)_t- \sum_{s \le t} \Delta (S*\nu)_s \cdot \Delta(T*\nu)_s. \label{temp1092}
\end{align}
The key observation is that, since $N$ is $\bbF^Y$-measurable, $E[\phi_tN_t|\ccF_t^Y] = E[\phi_t|\ccF_t^Y]N_t=\pi_t(\phi)N_t$, such that representations \eqref{p4} and \eqref{p5} coincide. Since both processes are special semimartingales, also their semimartingale characteristics coincide and in particular their finite variation parts. From Equations \eqref{p4} and \eqref{temp1092} we therefore obtain the equality
\begin{align}
   (F^1 * \mu)_t  &= (S \cdot T *  \nu)_t- \sum_{s \le t} \Delta (S*\nu)_s \cdot \Delta(T*\nu)_s, \quad t \ge 0.
   \label{temp1094}
\end{align}
In the following, we derive a precise representation of the left hand side, following the definition according to \eqref{p4}. To this end, we note that for any of the predictable times $t=T_1,\dots, t=T_n,$
\begin{align*}
    \Delta (F^1 * \mu)_t &=\Delta N_{t} \cdot  E[\Delta \phi_{t} |\ccF_{t-}^Y,\Delta Y_{t}] \\
    &= \Delta (T*\tilde \mu)_t \cdot \Delta (F^2*\mu)_t,
\end{align*}
with $F^2(t,y) :=  E[\Delta \phi_{t} |\ccF_{t-}^Y,\Delta Y_{t}=y].$ Now we can proceed as above, and obtain that
\begin{align*}
   \Delta (T*\tilde \mu)_t \cdot \Delta (F^2*\mu)_t &=   
   \Delta (T* \mu)_t \cdot \Delta (F^2*\mu)_t - \Delta (T* \nu)_t \cdot \Delta (F^2*\mu)_t \\
   &=\Delta (T \cdot F^2*\mu)_t - \Delta (T* \nu)_t \cdot \Delta (F^2*\tilde \mu)_t
   - \Delta (T* \nu)_t \cdot \Delta (F^2*\nu)_t \\
   &= \Delta (T \cdot F^2*\nu)_t- \Delta (T* \nu)_t \cdot \Delta (F^2*\nu)_t +\Delta m^5_t,
   \end{align*}
with an $\bbF^Y$-martingale $m^5$. Hence, from Equation \eqref{temp1094},
\begin{align}
      (T \cdot F^2*\nu)_t- \sum_{s \le t}\Delta (T* \nu)_s \cdot \Delta (F^2*\nu)_s
     &=(S \cdot T *  \nu)_t- \sum_{s \le t} \Delta (S*\nu)_s \cdot \Delta(T*\nu)_s.
\end{align}
Since $T$ was arbitrary, we recover $S=F^2$ and the proof is finished.
\end{proof}

\section{The Zakai Equation}\label{sec5: zakai formulation}
In nonlinear filtering, the Zakai equation provides a \emph{linear} stochastic partial differential equation which is often to treat. The key step for this is to study  the unnormalised conditional distribution instead of the conditional distribution (as was done in the Kushner-Stratonovich equation in Theorem \ref{Kushner-Stratonovich}).

More precisely, the unnormalised measure-valued process $\rho$ allows to recover the conditional probability $\pi$ via the so-called Kallianpur-Striebel equation: for $t \ge 0$,
\begin{align}\label{Striebel}
    \pi_t(\phi)=\dfrac{\rho_t(\phi)}{\rho_t(1)}, \qquad \phi \in \mathcal{C}_b^{2}( \R^m;\R)
\end{align}
and process $\rho$ satisfies a linear equation, which we will show below. This linearization plays a fundamental role in the analysis of filtering problems, and its derivation relies on a change of measure, often referred to as the \emph{change of probability measure method} \citep{bain2009crisan} or as the \emph{reference probability method} \citep{zeitouni1986reference}.

\subsection{A change of measure}
To obtain the unnormalised distribution we perform a suitable change of measure. To this end,  
consider a density process $Z$, i.e.\ a strictly positive $(P, \bbF^Y)$-martingale with $E_P[Z_0]=1$, and 
introduce the probability measure $P'$ on any $(\Omega,\ccF_t)$, $t \ge 0$  via 
$$dP'= Z_t\,dP.$$
This allows us to define the unnormalised conditional distribution $\rho$   by 
\begin{align}
    \rho_t(\phi):= E_{P'}[\phi(X_t)Z^{-1}_t \mid \ccF_t^Y],\qquad \phi \in \mathcal{C}_b^{2}(\R^m;\R), \qquad t \ge 0.
\end{align} 
\begin{proposition}\label{thm: girsanov}
    Let $\Gamma=\Gamma(\omega;t,y)$ be an $\bbF^Y$-predictable function such that $E[(e^\Gamma*\nu)_t]< \infty$
    and 
\begin{align}\label{positivity condition}
    e^{\Gamma(t, y)} - \Delta \tilde B_t > 0, \qquad t \ge 0, 
\end{align}
where
    $ \tilde B := (e^\Gamma -1)*\nu. $
Furthermore, define 
    $
        M:= (e^\Gamma-1)*\tilde \mu.
    $
    Then $Z:=\ccE(M)$ is a strictly positive $(\bbF^Y,P)$-martingale 
     whenever $E[Z_t] = 1$ for all $t \ge 0$.
    Moreover, there exists a locally equivalent measure $P'$, such that
    $$
    dP'_t  = Z_t \, dP_t, \quad t \geq 0,
    $$
    and under $P'$, the compensator of  $\mu$ is given by
    \begin{align}\label{compensator under P'}
    \nu'(dt, dy) = (e^{\Gamma(t, y)}-\Delta \tilde B_t) \nu(dt, dy).
    \end{align}
\end{proposition}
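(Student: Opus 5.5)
The plan is to (i) show $M$ is a well-defined local martingale with jumps bounded below by $-1$ strictly, (ii) deduce from this that $Z=\ccE(M)$ is a strictly positive local martingale and, under $E[Z_t]=1$, a true martingale, and (iii) identify the $P'$-compensator of $\mu$ by a Girsanov argument for random measures.

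\textbf{Step (i).} By Lemma \ref{Compensated random measure}, $\nu(dt,dy)=\sum_i\delta_{T_i}(dt)F^i(dy)$ has $\nu(\{t\}\times\R^n)=1$ on $\{T_1,\dots,T_K\}$ and $0$ elsewhere. The integrability $E[(e^\Gamma*\nu)_t]<\infty$ makes $W:=e^\Gamma-1$ satisfy $E[(|W|*\nu)_t]<\infty$. Hence $M=W*\tilde\mu=W*\mu-W*\nu$ is a purely discontinuous $(\bbF^Y,P)$-martingale of finite variation.

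At a jump time $T_i$ with $\Delta Y_{T_i}=y$,
\begin{align*}
\Delta M_{T_i}=\big(e^{\Gamma(T_i,y)}-1\big)-\int\big(e^{\Gamma(T_i,y')}-1\big)F^i(dy')=e^{\Gamma(T_i,y)}-\Delta\tilde B_{T_i}-1 .
\end{align*}
Condition \eqref{positivity condition} therefore gives exactly $1+\Delta M>0$. Outside the $T_i$ we have $\Delta M=0$.

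\textbf{Step (ii).} Since $M$ has finite variation with finitely many jumps, the Doléans-Dade exponential reduces to the explicit product
\begin{align*}
Z_t=\prod_{i:T_i\le t}\big(1+\Delta M_{T_i}\big)\exp\big(-(W*\nu)^c_t\big).
\end{align*}
Here the continuous part of $W*\nu$ vanishes, because $\nu$ is purely atomic. So $Z_t=\prod_{T_i\le t}(e^{\Gamma(T_i,\Delta Y_{T_i})}-\Delta\tilde B_{T_i})$, which is strictly positive. As a stochastic exponential of a local martingale, $Z$ is a nonnegative local martingale and thus a supermartingale. A supermartingale with constant expectation $E[Z_t]=1=Z_0$ is a true martingale.

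Define $P'$ on each $\ccF^Y_t$ by $dP'_t=Z_t\,dP_t$. Consistency across $t$ follows from the martingale property, and local equivalence follows from $Z>0$.

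\textbf{Step (iii).} To identify the $P'$-compensator, take a nonnegative bounded $\tilde\ccP$-measurable $U$ and show that $(U*\mu)Z-(U\cdot(1+\Delta M\text{-kernel})*\nu)Z$ is a $P$-local martingale. It is cleanest to verify this through the criterion of Theorem III.3.24 in \cite{jacod2013limit}: under $P'$ the compensator is $\nu'=Y^Z\nu$, where
\begin{align*}
Y^Z=M^P_\mu\big(Z/Z_-\,\big|\,\tilde\ccP\big)
\end{align*}
is the conditional Doléans measure density. On the support of $\mu$, $Z/Z_-=1+\Delta M=e^{\Gamma(t,y)}-\Delta\tilde B_t$. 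The only subtle point is that this expression is itself already $\tilde\ccP$-measurable, since $\Gamma$ is predictable and $\Delta\tilde B_t=\int(e^{\Gamma(t,y')}-1)\nu(\{t\},dy')$ is predictable. Hence the conditional expectation is trivial, and $Y^Z=e^\Gamma-\Delta\tilde B$, which is exactly \eqref{compensator under P'}.

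Alternatively, one can check this directly with the product rule at each $T_i$. One computes $E[U(T_i,\Delta Y_{T_i})Z_{T_i}\mid\ccF^Y_{T_i-}]=Z_{T_i-}\int U(e^\Gamma-\Delta\tilde B)\,dF^i$, using $\ccF^Y_{T_i-}$-measurability of $\Delta\tilde B_{T_i}$ and Lemma \ref{Compensated random measure}.

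I expect the main obstacle to be this last step: justifying that the factor $e^\Gamma-\Delta\tilde B$, rather than $e^\Gamma$ alone, is the right density. This rests on the predictability of $\Delta\tilde B$ together with the atomic structure $\nu(\{T_i\}\times\R^n)=1$. One should also note that $\nu'(\{T_i\}\times\R^n)=1$ is automatic, since $\int(e^\Gamma-\Delta\tilde B)\,dF^i=1+\Delta\tilde B-\Delta\tilde B=1$, which is a useful sanity check.
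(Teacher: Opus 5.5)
Your proposal is correct and follows essentially the same route as the paper. The jump identity $Z_t=Z_{t-}\big(e^{\Gamma(t,\Delta Y_t)}-\Delta\tilde B_t\big)$ gives strict positivity, the supermartingale property together with $E[Z_t]=1$ gives the true martingale property, and the $P'$-compensator is read off from the Jacod--Shiryaev Girsanov theorem for random measures with the already $\tilde\ccP$-measurable density $e^{\Gamma}-\Delta\tilde B$; the paper checks the corresponding Dol\'eans-measure identity $E_P[Z_T(H*\mu)_T]=E_P[(Z_-(e^\Gamma-\Delta\tilde B)H*\mu)_T]$ by hand before invoking that theorem, and your direct conditional-expectation check at each $T_i$ is the same computation. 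One small point: the cleaner reference is the random-measure version the paper uses (Theorem III.3.17 with (III.3.16)) rather than III.3.24, which is stated for the jump measure of a semimartingale and would not directly cover $\mu$ if $\Delta Y_{T_i}=0$ occurs with positive probability.
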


A version of this result is given in \citet[Theorem 3.12]{bjork1997bond}, however in the case where intensities exist. 
Note that in the classical case with totally inaccessible jumps the process $\tilde B$ is absolutely continuous and hence $\Delta \tilde B=0$. Then Equation \eqref{compensator under P'} simplifies to the familiar expression
\begin{align*}
    \nu'(dt, dy) = e^{\Gamma(t, y)} \nu(dt, dy).
    \end{align*}

\begin{proof}[Proof of Proposition \ref{thm: girsanov}]
    First, note that by definition
    \begin{align*}
        \Delta Z_t = Z_{t-}\Delta M_t &=Z_{t-}\bigg(\int_{\R^n} (e^{\Gamma(t, y)} - 1) (\mu(\{t\}, dy)-\nu(\{t\}, dy))\bigg)
        =Z_{t-}\big(e^{\Gamma(t, \Delta Y_t)}-1- \Delta \tilde B_t\big) 
    \end{align*}
    and hence 
    \begin{align}\label{eq:dynZ}
        Z_t=Z_{t-}\big(e^{\Gamma(t, \Delta Y_t)}-\Delta \tilde B_t \big).
    \end{align} Therefore, by condition \eqref{positivity condition}, $Z_t >0$ for all $t$. By construction, $Z$ is the Dol\'ean-Dade exponential of the purely discontinuous local martingale $M$ and hence,  $Z$ is a strictly positive $(\bbF^Y,P)$-local martingale (see \cite[Theorem I.4.61]{jacod2013limit}). The additional condition $E[Z_t] = 1$ ensures it is a true martingale, allowing us to define $P'_t$ by $dP'_t=Z_t\,dP$ on $(\Omega,\ccF_t)$ for $t\ge 0$. This allows us to define a unique $P'$ on $\cup_{t \ge 0}\ccF_t$. This may be extended naturally to $(\Omega, \ccF)$, where however equivalence may not hold.
    Since $Z>0$, $P'$ is always \emph{locally} equivalent to $P$.

    Recall $\tilde \ccP=\ccP \otimes \ccB(\R^n)$ with the already introduced predictable $\sigma$-field $\ccP$. For any nonnegative $\tilde \ccP$-measurable function H and bounded stopping time $T$, by the martingale property of $Z$, 
    \begin{align*}
        E_P\bigg[Z_T\int_0^T \int_{\R^n} H(s,y)\mu(ds,dy)\bigg]&= E_P\bigg[Z_T \sum_{i=1}^K  \ind{T_i \le T} H(T_i,\Delta Y_{T_i}) \bigg] \\
        &=  \sum_{i=1}^K E_P\bigg[E_P[Z_T| \ccF_{T_i}^Y] \,\ind{T_i \le T} H(T_i,\Delta Y_{T_i}) \bigg]\\
        &= \sum_{i=1}^K E_P\bigg[\ind{T_i \le T}Z_{T_i}  H(T_i,\Delta Y_{T_i}) \bigg] \\
        &=\sum_{i=1}^K E_P\bigg[\ind{T_i \le T} Z_{T_i-}(e^{\Gamma(T_i, \Delta Y_{T_i})}-\Delta \tilde B_{T_i}) H(T_i,\Delta Y_{T_i}) \bigg]\\
        &=E_P\bigg[\int_0^T \int_{\R^n} Z_{s-}(e^{\Gamma(s, y)}-\Delta \tilde B_s)H(s,y) \, \mu(ds,dy)\bigg].
    \end{align*}
    Now, Equation (III.3.16) and Theorem III.3.17 in \cite{jacod2013limit} allows us to identify $Y(t,y)=e^{\Gamma(t, y)}-\Delta \tilde B_t$, such that
   $\nu'(dt, dy) = Y(t,y) \nu(dt,dy) = (e^{\Gamma(t, y)}-\Delta \tilde B_t) \nu(dt, dy).$
\end{proof}

\begin{proposition}\label{rho(1)}
         Let $\Gamma=\Gamma(\omega;t,y)$ be an $\bbF^Y$-predictable function such that Equation \eqref{positivity condition} hold. The process $\rho(1)$ solves 
        \begin{align}
          \label{expression rho(1)}  
          d\rho_t(1) &=  \rho_{t-}(1)\int_{\R^n}  \bigg(\dfrac{1}{e^{\Gamma(t, y)}-\Delta \tilde B_t}-1\bigg) \, \tilde{m}(dt, dy)
        \end{align}
        where $\tilde{m}(dt, dy) = \mu(dt, dy) - \nu'(dt, dy)$. 
\end{proposition}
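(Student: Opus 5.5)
First I identify $\rho_t(1)$. By definition, $\rho_t(1)=E_{P'}[Z_t^{-1}\mid\ccF_t^Y]$. Since $Z$ is $\bbF^Y$-adapted, this is simply $\rho_t(1)=Z_t^{-1}$. So the claim reduces to computing the dynamics of $1/Z$, where $Z=\ccE(M)$ comes from Proposition \ref{thm: girsanov}, and expressing them through the $P'$-compensated measure $\tilde m=\mu-\nu'$.

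Next I use the jump structure. By \eqref{eq:dynZ} in the proof of Proposition \ref{thm: girsanov}, $Z$ is piecewise constant with
$$Z_t=Z_{t-}\big(e^{\Gamma(t,\Delta Y_t)}-\Delta\tilde B_t\big)\quad\text{at } t\in\{T_1,\dots,T_K\}.$$
Hence $Z^{-1}$ is also piecewise constant, and at these times
$$\Delta(Z^{-1})_t=Z_{t-}^{-1}\Big(\frac{1}{e^{\Gamma(t,\Delta Y_t)}-\Delta\tilde B_t}-1\Big).$$
This is well defined by the positivity condition \eqref{positivity condition}. Writing $G(t,y):=\big(e^{\Gamma(t,y)}-\Delta\tilde B_t\big)^{-1}-1$, which is $\bbF^Y$-predictable, this reads $\rho_t(1)=\rho_0(1)+(\rho_-(1)G*\mu)_t$.

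It remains to show that the compensator part vanishes, i.e. $(\rho_-(1)G*\nu')=0$. Using \eqref{compensator under P'}, $\nu'=(e^\Gamma-\Delta\tilde B)\nu$, so
$$\int G(t,y)\,\nu'(\{t\},dy)=\int\big(1-e^{\Gamma(t,y)}+\Delta\tilde B_t\big)\,\nu(\{t\},dy).$$
This equals $\nu(\{t\},\R^n)\big(1+\Delta\tilde B_t\big)-\int e^{\Gamma}\,d\nu(\{t\},\cdot)$. I need this to be zero. Since $\nu(\{T_i\},\R^n)=F^i(\R^n)=1$ by Lemma \ref{Compensated random measure}, and $\Delta\tilde B_t=\int(e^{\Gamma}-1)\,\nu(\{t\},dy)=\int e^\Gamma\,d\nu(\{t\},\cdot)-1$, the expression is exactly $0$. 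Therefore $(\rho_-(1)G*\mu)=(\rho_-(1)G*\tilde m)$, which gives \eqref{expression rho(1)}.

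The main obstacle is justifying that this is a genuine stochastic integral with respect to $\tilde m$. This needs integrability of $|G|*\nu'$ under $P'$. That follows because $|G|\,\nu'$ is bounded by $\big(1+e^\Gamma+|\Delta\tilde B|\big)\nu$, which is integrable by the assumption $E[(e^\Gamma*\nu)_t]<\infty$ together with the local equivalence of $P'$ and $P$. Alternatively, since there are only $K$ jumps, one can simply read the identity pathwise as the finite sum above.
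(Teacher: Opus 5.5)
Your proposal is correct and follows the paper's proof essentially step by step. Like the paper, you identify $\rho(1)=Z^{-1}$ via $\bbF^Y$-adaptedness of $Z$, write $Z^{-1}$ as $1+(\rho_-(1)G*\mu)$ using \eqref{eq:dynZ}, and show the $\nu'$-part vanishes using $\nu(\{T_i\},\R^n)=1$ and $\Delta\tilde B_{T_i}=\int e^{\Gamma}\,d\nu(\{T_i\},\cdot)-1$.

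Your extra integrability remark, which the paper skips, is loose as stated, because $P$-integrability plus local equivalence does not by itself give $P'$-integrability; it is the factor $\rho_-(1)=Z_-^{-1}$ that turns $E_{P'}[(\rho_-(1)|G|*\nu')_t]$ into a finite $P$-expectation. Your pathwise finite-sum reading already suffices for the identity.
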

\begin{proof}
By Proposition \ref{thm: girsanov}, the Dol\'eans-Dade exponential $Z=\ccE \big((e^\Gamma-1)*\tilde \mu \big)$ is a strictly positive $(\bbF^Y,P)$-martingale whenever $E[Z_t] = 1$ for all $t \ge 0$.
Since $\Gamma$ is $\bbF^Y$-predictable, $Z$ is $\bbF^Y$-adapted and, hence $Z_t$ is $\ccF_t^Y$-measurable for each $t\ge 0$.
Therefore $\rho_t(1)=E_{P'}[Z_t^{-1}|\ccF_t^Y]=Z_t^{-1}$.

We compute the dynamics of $Z^{-1}$. By Equation \eqref{eq:dynZ}, 
$Z_t = Z_{t-}(e^{\Gamma(t, \Delta Y_t)}-\Delta \tilde B_t)$ and hence
$$\Delta Z_t^{-1} = Z_{t-}^{-1}\bigg(\dfrac{1}{e^{\Gamma(t, \Delta Y_t)}-\Delta \tilde B_t}-1\bigg).$$
Since $Z$ is a pure-jump process, 
\begin{align*}
    Z_{t}^{-1} = 1 + \int_0^t \int_{\R^n} Z_{s-}^{-1} \bigg(\dfrac{1}{e^{\Gamma(s, y)}-\Delta \tilde B_s}-1\bigg) \, \mu(ds,dy), \qquad t \ge 0.
\end{align*}
By definition of $\tilde m$,
\begin{align*}
    Z_{t}^{-1} &= 1 + \int_0^t Z_{s-}^{-1} \bigg[\int_{\R^n}  \bigg(\dfrac{1}{e^{\Gamma(s, y)}-\Delta \tilde B_s}-1\bigg) \tilde{m}(ds, dy)\\
    &\qquad \qquad \qquad\qquad+ \int_{\R^n}  \bigg(\dfrac{1}{e^{\Gamma(s, y)}-\Delta \tilde B_s}-1\bigg)\nu'(ds, dy)\bigg].
\end{align*}
We show that the last (predictable) integral vanishes. For $(\omega,t)$ such that $t \notin \{T_1(\omega), \ldots,T_K(\omega)\}$, we have $\nu(dt,dy)(\omega)=0$. This  implies $\nu'(dt,dy)(\omega)=0$. 
If, on the contrary, we have $t \in \{T_1(\omega),\dots,T_K(\omega)\}$, then 
\begin{align*}
     \int_{\R^n}  \bigg(\dfrac{1}{e^{\Gamma(t, y)}-\Delta \tilde B_t}-1\bigg)\nu'(dt, dy) &=  \int_{\R^n}  \big(1- (e^{\Gamma(t, y)}-\Delta \tilde B_t)\big)\nu(dt, dy)\\
     &=\sum_{i=1}^K \ind{t=T_i}\int_{\R^n}  \big(1- (e^{\Gamma(T_i, y)}-\Delta \tilde B_{T_i})\big)\nu(\{T_i\}, dy)\\
     &= \sum_{i=1}^K \ind{t=T_i}\bigg(\int_{\R^n} \nu(\{T_i\}, dy) - \int_{\R^n}(e^{\Gamma(T_i, y)}-\Delta \tilde B_{T_i})\big)\nu(\{T_i\}, dy)\bigg)\\
     &= 1-1=0,
\end{align*}
since $\displaystyle\int_{\R^n} \nu(\{T_i\}, dy)=1$ and, by the definition of $\Delta \tilde B_{T_i}$,
$$\int_{\R^n}(e^{\Gamma(T_i, y)}-\Delta \tilde B_{T_i})\big)\nu(\{T_i\}, dy)=\int_{\R^n}e^{\Gamma(T_i, y)}\,\nu(\{T_i\}, dy)-\Delta \tilde B_{T_i}=1.$$
Hence, $\rho(1)=Z^{-1}$ is a local martingale and the conclusion follows.
\end{proof}

Building on Proposition \ref{rho(1)} and the Kallianpur-Striebel formula, we now derive the Zakai equation governing the unnormalized conditional distribution  $\rho_t(\phi)$. To this end, 
we make the following assumption. Recall that by $F$ we denoted the distribution of $\eta_1$ and that the regular conditional distribution of $\Delta Y_{T_i}$, denoted by $F^i$, was introduced in Equation \eqref{conditional distribution of jump size:2}.
\begin{assumption}\label{existence of L^i}
    For each  $i=1,\dots,K$, there exists a $\ccF_{T_i-}^Y\otimes \ccB(\R^n)$-measurable, positive function $L^i: \Omega \times \R^n \to \R$ such that
\begin{align}\label{Eq: L^i}
    \int_A L^i(\omega,y) F^i(dy)(\omega) = \int_A F(dy), \qquad A \in \ccB(\R^n),
\end{align}
for  $P$-almost every $\omega \in \Omega$.
\end{assumption}
Given this assumption, we let
\begin{align}\label{eq:Gamma}
    \Gamma(\omega,T_i(\omega),y):= \log L^i(\omega,y), \quad i =1,\dots,K
\end{align}
and $\Gamma(\omega,t,y)=0$ whenever $t \not \in \{T_1(\omega),\dots,T_K(\omega)\}$. Moreover. we define
\begin{equation*}
    L := e^\Gamma,
\end{equation*}
such that $L(\omega,T_i(\omega),y)=L^i(\omega,y)$.
This choice of $\Gamma$ induces, by Equation \eqref{eq:dynZ} a change of measure, under which the observation becomes uninformative. This can already be seen in the following lemma.

\begin{lemma}\label{lem5.4}
Assume that Assumption \ref{existence of L^i} holds.
    Choosing $\Gamma$ as in \eqref{eq:Gamma}, implies that $\Delta \tilde B=0$. Additionally, $\nu'(\{T_i\},dy)=F(dy)$ for $i=1,\dots,K$.
\end{lemma}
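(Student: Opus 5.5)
The computation is direct and happens at each of the finitely many predictable times, since away from these times both $\nu$ and $\Gamma$ carry no mass.

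\emph{Step 1: $\Delta\tilde B=0$.} By definition $\tilde B=(e^\Gamma-1)*\nu$ and $\nu(dt,dy)=\sum_{i=1}^K\delta_{T_i}(dt)F^i(dy)$ by Lemma \ref{Compensated random measure}. Hence $\Delta\tilde B_t=0$ for $t\notin\{T_1,\dots,T_K\}$. At $t=T_i$ we have
\begin{align*}
\Delta\tilde B_{T_i}=\int_{\R^n}\big(L^i(y)-1\big)\,F^i(dy).
\end{align*}
Applying \eqref{Eq: L^i} with $A=\R^n$ gives $\int L^i\,dF^i=F(\R^n)=1$. Since $F^i$ is also a probability measure, $\int 1\,dF^i=1$, and the difference vanishes $P$-a.s.

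Two technical points need attention here. First, $L^i\ge 0$ is $F^i$-integrable by \eqref{Eq: L^i}, so splitting the integral into $\int L^i\,dF^i-\int 1\,dF^i$ is legitimate. Second, the $T_i$ may coincide on some event. If they do, the atoms of $\nu$ at a common time add up, and one has to read $\Gamma$ on $\{T_i=T_j\}$ consistently. I would treat this by noting that the jump times are implicitly assumed distinct, since $\mu$ is integer-valued with unit atoms. Alternatively, one can work on $\{T_i\ne T_j\}$ and remark that the general case only changes notation. The remaining exceptional sets are $P$-null and can be absorbed into the choice of the versions $F^i$, since there are finitely many indices.

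\emph{Step 2: the compensator under $P'$.} Insert $\Delta\tilde B=0$ into \eqref{compensator under P'} from Proposition \ref{thm: girsanov}. This gives $\nu'(dt,dy)=e^{\Gamma(t,y)}\nu(dt,dy)$, and therefore
\begin{align*}
\nu'(\{T_i\},dy)=L^i(y)\,F^i(dy)=F(dy),
\end{align*}
again by \eqref{Eq: L^i}, now with arbitrary $A$.

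It is also worth checking that Proposition \ref{thm: girsanov} actually applies. The positivity condition \eqref{positivity condition} reduces to $L^i>0$, which holds by assumption. The integrability condition $E[(e^\Gamma*\nu)_t]<\infty$ also holds, because $(e^\Gamma*\nu)_t\le\sum_i\int L^i\,dF^i=K$ (taking $e^0=1$ off the jump times only contributes zero $\nu$-mass).

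The only step I expect to require care is the null-set and version bookkeeping in Step 1 and the possible coincidence of jump times. The rest is a one-line computation.
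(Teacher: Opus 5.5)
Your proof is correct and follows the paper's argument: $\Delta\tilde B$ vanishes off the $T_i$ because $\nu$ carries no mass there, and at each $T_i$ one has $\Delta\tilde B_{T_i}=\int_{\R^n}(L^i(y)-1)\,F^i(dy)=\int_{\R^n}F(dy)-1=0$ by \eqref{Eq: L^i} with $A=\R^n$. You go slightly beyond the paper, which leaves the second claim implicit: you obtain it by inserting $\Delta\tilde B=0$ into \eqref{compensator under P'}, and you check the hypotheses of Proposition \ref{thm: girsanov}; for completeness, its remaining hypothesis $E[Z_t]=1$ also holds, since $Z_t=\prod_{i:T_i\le t}L^i(\Delta Y_{T_i})$ and conditioning successively on $\ccF^Y_{T_i-}$ turns each factor into its conditional mean $\int_{\R^n}L^i\,dF^i=1$.
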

\begin{proof}
    Recall from Proposition \ref{thm: girsanov} that $\tilde B=(e^\Gamma -1)*\nu$, such that for $t \not \in \{T_1(\omega),\dots,T_K(\omega)\}$, $\tilde B_t(\omega)=0$. Furthermore, by the definition of $\nu$ in Equation \eqref{equ2: expression of mu^p} together with Assumption \ref{existence of L^i},

     \begin{align*}
     \Delta \tilde B_{T_i} &= \int_{\R^n} (e^{\Gamma(T_i,y)}-1)F^i(dy)
     = \int_{\R^n} (L^i(y)-1)F^i(dy)=\int_{\R^n} F(dy)-1=0. \tag*{\qedhere}
 \end{align*}\qedhere
\end{proof}

For our second main result, the Zakai equation, we introduce the following notation. Recall the definition of $S(\phi)$ from Equation \eqref{def:S} and
let
\begin{align}
 \label{tilde U}    U(T_i,y)&:= \frac1{L^i(y)}-1, \quad i=1,\dots,K, \\
 \label{V(phi)}   V(\phi)(T_i,y)&:=\frac{S(\phi)(T_i,y)}{L^i(y)}, \qquad i=1,\dots,K \\
 \label{W}
 W(\phi)(T_i)&= \Delta (S(\phi)*\nu)_{T_i}
\end{align}
and $U(t,y)=V(\phi)(t,y)=0$ for all $t \not \in \{T_1,\dots,T_K\}.$ Furthermore, let
\begin{align}
    R(\phi)&:=\rho_-(1)\cdot V(\phi) - \rho_-(1) W(\phi) U + \big(\rho_-(\phi) +\rho_{-}(\ccA \phi)  \big)\cdot U.
\end{align}

\begin{theorem}[Zakai Equation] \label{Zakai equation}
    Assume that Assumption \ref{assumption 1} and Assumption \ref{existence of L^i} hold. Then, for  every  $\phi \in \mathcal{C}_b^{2}(\R^m; \R)$,
    \begin{align}\label{eq:zakai}
        \rho_t(\phi) &= \rho_0(\phi) +\int_0^t \rho_s(\ccL\phi)ds + \int_0^t \rho_{s-}(\ccA \phi)dA_s + M_t(\phi), \quad t\ge 0.
    \end{align}
    where $M(\phi)$ is the $(\bbF^Y,P')$-local martingale given by
    $$M(\phi)=\big(R(\phi)*\tilde m\big).$$
\end{theorem}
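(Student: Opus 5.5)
Since $\Gamma$ is $\bbF^Y$-predictable, $Z$ is $\bbF^Y$-adapted, so (as in the proof of Proposition \ref{rho(1)}) $\rho_t(\phi)=E_{P'}[\phi(X_t)Z_t^{-1}|\ccF^Y_t]=Z_t^{-1}E_P[\phi(X_t)|\ccF_t^Y]=\rho_t(1)\pi_t(\phi)$. This is the Kallianpur--Striebel formula \eqref{Striebel}. The plan is therefore to apply the product rule to $\rho(1)\cdot\pi(\phi)$, with the dynamics of $\pi(\phi)$ taken from Theorem \ref{Kushner-Stratonovich} and those of $\rho(1)=Z^{-1}$ from Proposition \ref{rho(1)}. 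With $\Gamma$ chosen as in \eqref{eq:Gamma}, Lemma \ref{lem5.4} gives $\Delta\tilde B=0$ and $\nu'(\{T_i\},dy)=F(dy)$. Hence $\rho_t(1)=\rho_{t-}(1)/L^i(\Delta Y_t)$ at $t=T_i$, i.e.\ $d\rho(1)=\rho_-(1)\,U*\tilde m$ with $U$ as in \eqref{tilde U}, and $\rho(1)$ is constant between the $T_i$.

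Product rule: $\rho_t(\phi)=\rho_0(\phi)+\int_0^t\rho_{s-}(1)\,d\pi_s(\phi)+\int_0^t\pi_{s-}(\phi)\,d\rho_s(1)+[\rho(1),\pi(\phi)]_t$. The three terms are handled as follows.
\begin{enumerate}
\item The drift $\rho_{s-}(1)\pi_s(\ccL\phi)\,ds$ equals $\rho_s(\ccL\phi)\,ds$, because $\rho(1)$ only jumps at finitely many times.
\item The $dA$ term gives $\rho_{s-}(\ccA\phi)\,dA_s$.
\item The term $\pi_-(\phi)\,d\rho(1)$ gives $\rho_-(\phi)U*\tilde m$.
\end{enumerate}
The real work is at each $T_i$. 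There I would write the jump of $\pi(\phi)$ as
\[
\Delta\pi_{T_i}(\phi)=\pi_{T_i-}(\ccA\phi)+S(\phi)(T_i,\Delta Y_{T_i})-W(\phi)(T_i),
\]
using $\Delta(S*\tilde\mu)=S(T_i,\Delta Y)-\Delta(S*\nu)$. Then I combine
\[
\rho_{T_i-}(1)\Delta\pi_{T_i}(\phi)+\Delta\rho_{T_i}(1)\Delta\pi_{T_i}(\phi)=\rho_{T_i-}(1)\,\frac{\Delta\pi_{T_i}(\phi)}{L^i(\Delta Y_{T_i})}.
\]
Expanding with $1/L^i=1+U$ gives
\[
\rho_{T_i-}(1)\Delta\pi_{T_i}(\phi)/L^i=\rho_{T_i-}(\ccA\phi)+\rho_{T_i-}(\ccA\phi)U+\rho_{T_i-}(1)V(\phi)-\rho_{T_i-}(1)W(\phi)-\rho_{T_i-}(1)W(\phi)U,
\]
with $V$, $W$ as in \eqref{V(phi)}, \eqref{W}. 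The pieces $\rho_{T_i-}(\ccA\phi)$ reproduce the $dA$ term. The pieces involving $U$ and $V$ are integrands against $\mu$ at $T_i$, and I rewrite them as integrals against $\tilde m+\nu'$.

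The main obstacle is the leftover predictable part: I must check that the $\nu'$-compensators of all jump terms, together with $-\rho_-(1)W(\phi)\,dA$, sum to zero, so that what remains is exactly $R(\phi)*\tilde m$. Under $\nu'(\{T_i\},dy)=F(dy)$, Assumption \ref{existence of L^i} gives
\[
\int U\,dF=\int dF^i-1=0,\qquad \int V(\phi)\,dF=\int S(\phi)\,dF^i=W(\phi)(T_i).
\]
Using these:
\begin{itemize}
\item the $U$-terms have vanishing compensator;
\item $\rho_-(1)V$ compensates to $\rho_-(1)W$, which cancels $-\rho_-(1)W$;
\item $-\rho_-(1)WU$ compensates to $0$.
\end{itemize}
What remains is $\rho_-(1)V-\rho_-(1)WU+(\rho_-(\phi)+\rho_-(\ccA\phi))U=R(\phi)$ integrated against $\tilde m$. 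This matches the definition of $R(\phi)$. Should a cross term appear differently, I would recheck whether $\rho_-(\phi)U$ arises from the $\pi_-\,d\rho(1)$ term or from the bracket.

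Finally, $R(\phi)$ is bounded on each $T_i$ locally, since $S$ is bounded by Lemma \ref{lem:conditions} and $\rho_-$ is finite. Hence $R(\phi)*\tilde m$ is an $(\bbF^Y,P')$-local martingale, because $\tilde m$ is the $P'$-compensated measure by Proposition \ref{thm: girsanov}. This would yield \eqref{eq:zakai}.
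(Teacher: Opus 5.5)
Your computation is correct and follows the paper's route. Both arguments use Kallianpur--Striebel, the product rule for $\rho(1)\pi(\phi)$ with the Kushner--Stratonovich dynamics and $d\rho(1)=\rho_-(1)\,U*\tilde m$, and then compensation under $\nu'(\{T_i\},dy)=F(dy)$. Your bookkeeping is tidier than the paper's expansion of $[\rho(1),\pi(\phi)]$ through $\tilde\mu=\tilde m+(1-L^{-1})\nu'$. You merge $\rho_{T_i-}(1)\Delta\pi_{T_i}(\phi)$ with the bracket into $\rho_{T_i-}(1)\Delta\pi_{T_i}(\phi)/L^i(\Delta Y_{T_i})$ and then need only $\int U\,dF=0$ and $\int V(\phi)\,dF=W(\phi)(T_i)$. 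It lands on the same $R(\phi)$.

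The final step fails as written. $R(\phi)$ is in general \emph{not} bounded, because $U=1/L^i-1$ and $V(\phi)=S(\phi)/L^i$ blow up where $L^i$ is small. In the Gaussian example, $L^i=dF/dF^i\to 0$ as $|y|\to\infty$, since $F^i$ has the larger variance. Boundedness of $S(\phi)$ therefore does not help.

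What you need instead is conditional integrability:
$\int|U|\,dF=\int|1-L^i|\,dF^i\le 2$ and $\int|V(\phi)|\,dF=\int|S(\phi)|\,dF^i\le 2\|\phi\|_\infty$.
Hence $\int|R(\phi)(T_i,y)|\,\nu'(\{T_i\},dy)<\infty$ a.s.

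Then $R(\phi)*\tilde m$ has finitely many jumps, all at predictable times. Each jump is $P'$-integrable conditionally on $\ccF^Y_{T_i-}$, with conditional mean zero. The compensator of $\sum_i|\Delta(R(\phi)*\tilde m)_{T_i}|\,\Ind_{[T_i,\infty)}$ is a finite predictable increasing process, hence locally integrable. This gives the $(\bbF^Y,P')$-local martingale property. The paper does not argue this point either, but the reason you gave would not carry it.
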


\begin{proof}[Proof of Theorem \ref{Zakai equation}]
First, under Assumption \ref{existence of L^i}, we may choose $\Gamma$ according to Equation \eqref{eq:Gamma}. Then, Lemma \ref{lem5.4} implies $\Delta \tilde B=0$ and with 
    Proposition \ref{rho(1)} we obtain the representation
    \begin{align} \label{eq:representation rho(1)}
        \rho(1)=( \rho_-(1) U * \tilde m).
    \end{align} 
    
    Second, by the Kallianpur-Striebel formula in  Equation \eqref{Striebel}, we have that $\rho_t(\phi)=\rho_t(1)\, \pi_t(\phi).$ 
    
    This allows us to apply 
     the product rule to this formula. Hence, by Equation \eqref{6} and the above representation of $\rho(1)$, we obtain that
    \begin{align}
        \nonumber \rho_t(1)\pi_t(\phi)&= \rho_0(\phi) + (\rho_-(1) \cdot \pi(\phi))_t + (\pi_-(\phi) \cdot \rho(1))_t + [\rho(1),\pi(\phi)]_t\\
       \nonumber &= \rho_0(\phi) + \int_0^t \rho_{s-}(1)\pi_{s-}(\ccL\phi)ds + \int_0^t\rho_{s-}(1)\pi_{s-}(\ccA \phi)dA_s + (\rho_-(1)\cdot S(\phi) * \tilde \mu)_t
       \\
        &+ (\pi_-(\phi)\cdot \rho_-(1)  U * \tilde m)_t + \sum_{i=1}^K \ind{T_i\le t} \Delta \rho_{T_i}(1) \, \Delta \pi_{T_i}(\phi). 
      \label{rho equation} 
    \end{align}
    In the last line we will  use once more $ \pi(\phi)\, \rho(1)=\rho(\phi)$ for the first term.

    As a next step, we consider the last term of Equation \eqref{rho equation}, which contains the joint jumps. Equation \eqref{6} and Proposition \ref{rho(1)} yield, as above,
    \begin{align}
        \Delta \rho_{T_i}(1) \, \Delta \pi_{T_i}(\phi) &=\Delta \rho_{T_i}(1) \cdot \pi_{T_i^-}\big(\ccA \phi \big) 
    + \Delta \rho_{T_i}(1)  \cdot \Delta (S(\phi)*\tilde \mu)_{T_i} .
    \label{eq:1231}
    \end{align}
    
    For the following, we  compute the second term on the left hand side of this equation. To this end, 
note that, by Equation \eqref{eq:representation rho(1)},  $\Delta \rho_{T_i}(1)=\Delta (\tilde U*\tilde m)_{T_i}$ with $\tilde U = \rho_-(1)  U$.
Next, recall that under Assumption \ref{existence of L^i}, $\nu' = L \nu$ with positive $L$, such that $\nu = L^{-1} \nu'$.
By the definition of $\tilde \mu$ in Equation \eqref{F Y-compensated random measure} and by the definition of $\tilde m$ in Proposition \ref{rho(1)}, we obtain that 
\begin{align}
\label{eq:tilde mu represented in tilde m}    
\tilde \mu = \tilde m + \nu'-\nu = \tilde m + (1-L^{-1}) \, \nu'. \end{align}
 Thus, the last term in Equation \eqref{eq:1231}
 equals    \begin{align}
      \notag  \Delta \rho_{T_i}(1) &\cdot \Delta (S(\phi) * \tilde \mu)_{T_i} =
         \Delta (\tilde U*\tilde m)_{T_i} \cdot \Delta (S(\phi)*\tilde \mu)_{T_i}  \\
        \notag &=
        \Big( \Delta (S(\phi)*\tilde m)_{T_i} +  \Delta (S(\phi)(1-L^{-1})*\nu')_{T_i} \Big) \cdot \Delta (\tilde U*\tilde m)_{T_i}
        \\
        &=\Delta (S(\phi)*\tilde m)_{T_i}\cdot \Delta (\tilde U*\tilde m)_{T_i} +  \Delta (S(\phi)(1-L^{-1})*\nu')_{T_i}  \cdot \Delta (\tilde U*\tilde m)_{T_i}. \label{cova}
    \end{align}
The first term can be computed as follows:
\begin{align}
    \Delta (S(\phi)*(\mu-\nu'))_{T_i}\cdot &\Delta (\tilde U*(\mu-\nu'))_{T_i} 
         = 
         \Delta (S(\phi)*\mu)_{T_i}\cdot \Delta (\tilde U*\mu)_{T_i}
         - \Delta (S(\phi)*\mu)_{T_i}\cdot \Delta (\tilde U*\nu')_{T_i}  \notag \\
         &-\Delta (S(\phi)*\nu')_{T_i}\cdot \Delta (\tilde U*\mu)_{T_i}
         +\Delta (S(\phi)*\nu')_{T_i}\cdot \Delta (\tilde U*\nu')_{T_i}.\label{eq:1258}
\end{align}
Using again that $\mu = \tilde m + \nu'$, this equals
\begin{align*}
\eqref{eq:1258}
         &= 
         \Delta (S(\phi) \cdot \tilde U * \tilde m)_{T_i} 
         + \Delta (S(\phi) \cdot \tilde U * \nu')_{T_i} \\
         &- \Delta (S(\phi)*\tilde m)_{T_i}\cdot \Delta (\tilde U*\nu')_{T_i}
         - \Delta (S(\phi)*\nu')_{T_i}\cdot \Delta (\tilde U*\nu')_{T_i}\\
         &-\Delta (S(\phi)*\nu')_{T_i}\cdot \Delta (\tilde U*\tilde m)_{T_i}
         -\Delta (S(\phi)*\nu')_{T_i}\cdot \Delta (\tilde U*\nu')_{T_i}
         +\Delta (S(\phi)*\nu')_{T_i}\cdot \Delta (\tilde U*\nu')_{T_i} \\
         &= \Delta (S(\phi) \cdot \tilde U * \tilde m)_{T_i} 
         + \Delta (S(\phi) \cdot \tilde U * \nu')_{T_i}-  \Delta (S(\phi)*\tilde m)_{T_i} \cdot \Delta(\tilde U*\nu')_{T_i}\\
         &  - \Delta (S(\phi)*\nu')_{T_i} \cdot \Delta(\tilde U*\nu')_{T_i}-\Delta (S(\phi)*\nu')_{T_i}\cdot \Delta (\tilde U*\tilde m)_{T_i}.
    \end{align*}
    Therefore, by adding the last term of Equation \eqref{cova} into the final development of Equation \eqref{eq:1258}, we obtain
    \begin{align}
        \Delta \rho_{T_i}(1) \cdot \Delta (S(\phi) * \tilde \mu)_{T_i} &= \Delta (S(\phi) \cdot \tilde U * \tilde m)_{T_i} 
         + \Delta (S(\phi) \cdot \tilde U * \nu')_{T_i}-  \Delta (S(\phi)*\tilde m)_{T_i} \cdot \Delta(\tilde U*\nu')_{T_i}\notag\\
         & - \Delta (S(\phi)*\nu')_{T_i} \cdot \Delta(\tilde U*\nu')_{T_i} - \Delta (S(\phi)L^{-1}*\nu')_{T_i}  \cdot \Delta (\tilde U*\tilde m)_{T_i}
         \label{eq:1281}
    \end{align}
    Moreover, by Equation \eqref{tilde U}  
    we obtain that    $$\Delta(\tilde U*\nu')_{T_i}=\rho_{T_{i}^{-}}(1)\int_{\R^n}\bigg(\dfrac{1}{L^i(y)}-1\bigg)\nu'(\{T_i\},dy)=\rho_{T_{i}^{-}}\Delta \tilde B_{T_i}=0, $$
    where the last equality follows by Lemma \ref{lem5.4}, such that the third and the fourth term in Equation \eqref{eq:1281} vanish. Regarding the final term in this equation, we observe that
\begin{align*}
    \Delta (S(\phi)L^{-1}*\nu')_{T_i}  \cdot \Delta (\tilde U*\tilde m)_{T_i} &= \Delta (S(\phi)*\nu)_{T_i}  \cdot \Delta (\tilde U*\tilde m)_{T_i}
    = \Delta (W(\phi) \tilde U * \tilde m)_{T_i},
\end{align*}
by Equation \eqref{W}.
Summarising, the above findings yields that
\begin{align}
   \label{rho covariation} 
   \sum_{i=1}^K \ind{T_i\le t}\Delta \pi_{T_i}(\phi) \Delta \rho_{T_i}(1) 
  \notag &= \sum_{i=1}^K \ind{T_i\le t}\Delta \big(\pi_{-}(\ccA \phi)  \cdot \rho(1)\big)_{T_i}
   + (S(\phi) \cdot \tilde U * \tilde m)_t 
    + (S(\phi) \cdot \tilde U * \nu')_t\\ 
    &- (W(\phi) \tilde U * \tilde m)_t.
\end{align}

Using Equation \eqref{eq:representation rho(1)} on the first term in \eqref{rho equation}, we obtain that 
\begin{align}
    \Delta \big(\pi_{-}(\ccA \phi)  \cdot \rho(1)\big) &=
    \Delta \big(\pi_{-}(\ccA \phi) \,  \rho_-(1) \,  U * \tilde m\big) = 
    \Delta \big(K(\phi)*\tilde m\big)
\end{align}with $K(\phi):=\rho_-(\ccA \phi) U$.  Using again Equations \eqref{tilde U} and \eqref{eq:tilde mu represented in tilde m}, 
\begin{align*}
    \big(\rho_-(1)\cdot S(\phi)*\tilde \mu \big)_t &= \big(\rho_-(1)\cdot S(\phi)*\tilde m \big)_t + \big(\rho_-(1)\cdot S(\phi)(1-L^{-1})*\nu' \big)_t\\ 
    &=\big(\rho_-(1)\cdot S(\phi)*\tilde m \big)_t - (\rho_-(1)S(\phi) \cdot U * \nu')_t.
\end{align*}

Substituting this and Equation \eqref{rho covariation} in \eqref{rho equation}, we obtain:
\begin{align*}
    \lefteqn{\rho_t(\phi)- \rho_0(\phi) -\int_0^t \rho_s(\ccL\phi)ds - \int_0^t \rho_{s-}(\ccA \phi)dA_s  }\qquad   \\
    &= \big(\rho_-(1)\cdot S(\phi)*\tilde m \big)_t - (\rho_-(1)S(\phi) \cdot U * \nu')_t + \big(\rho_-(\phi)\cdot  U * \tilde m)_t  + \sum_{i=1}^K \ind{T_i\le t}\Delta \big(K(\phi)*\tilde m\big)_{T_i}\\  
    &+ (\rho_-(1) S(\phi) \cdot U * \tilde m)_t +(\rho_-(1)S(\phi) \cdot U * \nu')_t - (\rho_-(1)W(\phi)U * \tilde m)_t\\
    &= \big(\rho_-(1)\cdot S(\phi)*\tilde m \big)_t +\big(\rho_-(\phi)\cdot  U * \tilde m)_t +  \big(K(\phi)*\tilde m\big)_t + (\rho_-(1) S(\phi) \cdot U * \tilde m)_t\\
    &- (\rho_-(1)W(\phi)U * \tilde m)_t\\
    &= \big((\rho_-(1) S(\phi) + \rho_-(1) S(\phi) U) * \tilde m\big)_t
    + \big((\rho_-(\phi)+\rho_-(\ccA \phi)) U * \tilde m\big)_t -(\rho_-(1)W(\phi)U * \tilde m)_t \\
    &= M(\phi)
\end{align*}
and the result is proven.
\end{proof}

To illustrate the practical utility of the change of measure approach, we present a classic example: the Kalman filter with observations at predictable times. This examples clearly demonstrates how the abstract measure transformation simplifies the filtering problem by decoupling the signal and observation under the new measure.
\begin{example}[Linear Gaussian filtering]
    Consider a one-dimensional linear system where the signal $X_t$ evolves as an  Ornstein-Uhlenbeck (OU) process where observations occur at deterministic times $0<T_1<T_2<\ldots<T_K$: 
    $$dX_t =-\lambda X_t dt + \sigma dB_t+\sum_{i=1}^K \xi_i\delta_{T_i}(dt),$$ 
    where the jump sizes $\xi_i,\ i=1,\dots,K$ are i.i.d.\ with distribution $\ccN(0,Q)$ and are independent of the Brownian motion $B$. The observation process records noisy linear measurements, such that
    $$Y_t = \sum_{i=1}^K \ind{T_i \le t}(AX_{T_{i}^{-}} +\eta_i),$$
    $(\eta_i) \stackrel{\text{i.i.d.}}\sim \ccN(0,R)$
    where $A\in \R^{n \times m}$ is the  observation gain, and the noise $(\eta_i)$ is independent of $B$ and $(\xi_i)$.
We now study the measure change induced by Assumption \ref{existence of L^i} and Equation \eqref{eq:Gamma}. First,  since $\eta_1 \sim \ccN(0,R)$, $ F=P(\eta_1 \in \cdot)$ is simply the $\ccN(0,R)$-distribution, which we will denote by $F=\ccN(0,R)$ in the following. The conditional distribution of $\Delta Y_{T_i}= AX_{T_i-}+\eta_i$ given $\ccF_{T_{i}^{-}}^Y$, was denoted by $F^i$.

Using the same argument as in Example \ref{sec:Kalman filter} with $m_{T_{i}^{-}}=\hat X_{T_{i}^{-}}$ and $S_i =A^2P_{T_{i}^{-}}+R$, we have $$F^i=\ccN(A \hat X_{T_{i}^{-}},A^2P_{T_{i}^{-}}+R).$$ 
Following Equation \eqref{Eq: L^i}, we compute $L^i=dF / dF^i$. A simple calculation gives that \begin{align*}
        \Gamma(T_i,y)=\log  L^i(y)=\dfrac{1}{2}\log\bigg(\dfrac{A^2P_{T_{i}^{-}}+R}{R}\bigg)-\dfrac{y^2}{2R}+\dfrac{(y-A\hat X_{T_{i}^{-}})^2}{2(A^2P_{T_{i}^{-}}+R)}.
    \end{align*}
    This is the explicit form of the log Radon-Nikodym derivative between the target distribution $\ccN(0,R)$ and the conditional observation distribution $\ccN(Am_{T_{i}^{-}},A^2P_{T_{i}^{-}}+R)$. 
By Proposition \ref{thm: girsanov}, the compensator of the observation under $P'$, denoted by $\nu'$ equals
$$\nu'(dt,dy)=\sum_{i=1}^K \delta_{T_i}(dt)F(dy),$$
such that $(\Delta Y_{T_i})$ are i.i.d. $\ccN(0,R)$ and are independent of the signal process $X$. 
The unnormalized filter $\rho_t(\phi)$ satisfies the linear Zakai equation. 
 \end{example}

\section*{Acknowledgement}
      The authors thank Wilfried Kuissi Kamdem for helpful discussions and for reading the earlier version and David Criens for helpful discussions.
      Félix B. Tambe-Ndonfack is funded the Deutsche Forschungsgemeinschaft (DFG, German Research Foundation) – Project-ID 499552394 – SFB 1597 Small Data.

\end{document}